\numberwithin{equation}{section}
\numberwithin{figure}{section}
\DeclareRobustCommand{\SkipTocEntry}[5]{}
\newcommand{\mR}{\mathbb{R}}   
\newcommand{\abs}[1]{\lvert #1 \rvert}  
\newcommand{\norm}[1]{\lVert #1 \rVert}  
\newcommand{\mD}{\mathscr{D}}
\newcommand{\mH}{\mathscr{H}}
\newcommand{\bfV}{\mathbf{V}}
\newcommand{\mL}{\mathcal{L}}
\newcommand{\bfi}{\mathbf{i}}
\newcommand{\vareps}{\varepsilon}
\newcommand{\p}{\partial}
\DeclareMathOperator{\dist}{dist}
\newcommand{\rmd}{\mathrm{d}}
\newcommand{\Id}{\mathrm{Id}}
\numberwithin{equation}{section}
\numberwithin{figure}{section}
\theoremstyle{plain}
\newtheorem{theorem}{\protect\theoremname}[section]
\theoremstyle{definition}
\theoremstyle{definition}
\newtheorem{example}[theorem]{\protect\examplename}
\theoremstyle{definition}
\theoremstyle{plain}
\newtheorem{lemma}[theorem]{\protect\lemmaname}
\theoremstyle{plain}
\newtheorem{corollary}[theorem]{\protect\corollaryname}
\theoremstyle{plain}
\newtheorem{proposition}[theorem]{\protect\propositionname}
\theoremstyle{plain}
\newtheorem*{remark*}{Remark}
\newtheorem*{definition*}{Definition}
\providecommand{\corollaryname}{Corollary}
\providecommand{\definitionname}{Definition}
\providecommand{\examplename}{Example}
\providecommand{\lemmaname}{Lemma}
\providecommand{\propositionname}{Proposition}
\providecommand{\remarkname}{Remark}
\providecommand{\theoremname}{Theorem}
\providecommand{\conjecturename}{Conjecture}
\begin{document}
\begin{sloppypar}

\title{On scattering behavior of corner domains with anisotropic inhomogeneities}

\author{Pu-Zhao Kow}
\address{Department of Mathematical Sciences, National Chengchi University, No. 64, Sec. 2, ZhiNan Rd., Wenshan District, 116302 Taipei, Taiwan}
\email{pzkow@g.nccu.edu.tw}

\author{Mikko Salo}
\address{Department of Mathematics and Statistics, P.O. Box 35 (MaD), FI-40014 University of Jyv\"{a}skyl\"{a}, Finland}
\email{mikko.j.salo@jyu.fi}

\author{Henrik Shahgholian}
\address{Department of Mathematics, KTH Royal Institute of Technology, SE-10044 Stockholm, Sweden}
\email{henriksh@kth.se}

\begin{abstract}
This paper investigates the possible scattering and non-scattering behavior of an anisotropic and inhomogeneous Lipschitz medium at a fixed wave number and with a single incident field. We connect the anisotropic non-scattering problem to a Bernoulli type free boundary problem. By invoking methods from the theory of free boundaries, we show that an anisotropic medium with Lipschitz but not $C^{1,\alpha}$ boundary scatters every incident wave that satisfies a non-degeneracy condition.
\end{abstract}

\keywords{free boundary, two-phase problem, nonscattering domains}
\subjclass[2020]{35J15, 35P25, 35R35}

\maketitle

\section{Introduction}

\subsection{Background}
We investigate the  problem of unraveling the nature of scattered waves, wherein the obstructing medium is a bounded region, and the irregularities within it are described by coefficients that may exhibit anisotropic properties. The scattering problem is modelled by the following wave equation:
\[
c(x)^{-2} \p_t^2 U - \nabla \cdot (A(x) \nabla U) = 0 \text{ in $\mR^n \times \{ t > 0\}$}.
\]
Here, the velocity of sound, denoted as $c$, and the symmetric matrix $A$ are in $L^{\infty}(\mathbb{R}^n)$, exhibiting uniform lower bounds throughout the medium. Notably, this equation encompasses both the classical wave equation, $c^{-2} \partial_t^2 U - \Delta U = 0$, where the sound speed is scalar, as well as the Riemannian wave equation, $\partial_t^2 U - \Delta_g U = 0$, which involves a Riemannian metric $g$, by making appropriate choices.

We consider scattering of waves with fixed frequency $\kappa > 0$,  which  corresponds to solutions of the form $U(x,t) = e^{\bfi \kappa t} u^{\rm to}(x)$, where $u^{\rm to}$ satisfies 
\[
\nabla \cdot (A(x) \nabla u^{\rm to}) + \kappa^2 \rho(x) u^{\rm to} = 0 \text{ in $\mR^n$}
\]
with $\rho = c^{-2}$. If we probe the medium with an incoming wave $u^{\rm inc}$ that solves 
\begin{equation}
(\Delta + \kappa^{2})u^{\rm inc} = 0 \quad \text{in $\mR^{n}$,} \label{eq:incident-field}
\end{equation}
then the total wave $u^{\rm to}$ has the form $u^{\rm to} = u^{\mathrm{inc}} + u^{\mathrm{sc}}$ where the scattered wave $u^{\mathrm{sc}}$ satisfies the outgoing Sommerfeld radiation condition.

Now, we proceed to provide a detailed mathematical expression. Consider $\Omega$, a bounded region in $\mathbb{R}^{n}$ (where $n\geq 2$) with a Lipschitz boundary and with $\mathbb{R}^n \setminus \overline{\Omega}$ connected. Within this domain, let $\rho \in L^{\infty}(\Omega)$ be a positive real-valued function. Additionally, let $A \in (C^{0,1}(\overline{\Omega}))^{n \times n}$ be a real symmetric matrix-valued function, satisfying the condition of uniform ellipticity
\begin{equation}
c_{\rm ellip}^{-1}\abs{\xi}^{2} \le \xi \cdot A(x)\xi \le c_{\rm ellip}\abs{\xi}^{2} \quad \text{for a.e. $x\in\Omega$ and all $\xi\in\mR^{n}$} \label{eq:unifurm-ellipticity}
\end{equation} 
for some constant $c_{\rm ellip}>0$.

Under the assumption that the medium outside $\Omega$ is homogeneous, if we illuminate the anisotropic medium $(\Omega, A, \rho)$ with an incident field $u^{\rm inc}$ having a fixed wave number $\kappa>0$ that satisfies \eqref{eq:incident-field}, classical scattering theory (see e.g. \cite[Theorem~1.38]{CCH23InverseScatteringTransmission} or \cite{CK19scattering,KG08Factorization}) guarantees the existence of unique scattered field $u^{\rm sc} \in H_{\rm loc}^{1}(\mR^{n})$ which is outgoing (the fact that $A$ is Lipschitz is required here since the argument involves the unique continuation principle). The total field $u^{\rm to} = u^{\rm sc} + u^{\rm inc}$ satisfies the following condition
\begin{equation*}
\left( \nabla \cdot \tilde{A}(x) \nabla + \kappa^{2} \tilde{\rho}(x) \right) u^{\rm to} = 0 \quad \text{in $\mR^{n}$,}
\end{equation*}
where 
\begin{equation}\label{eq:A}
\tilde{A} = A \chi_{\Omega} + \Id \chi_{\mR^{n}\setminus\Omega}, \quad \hbox{and} \quad  \tilde{\rho} = \rho \chi_{\Omega} + \chi_{\mR^{n}\setminus\Omega}.
\end{equation}
We recall the following definition. 

\begin{definition*}
A solution $v$ of $(\Delta + \kappa^{2})v=0$ in $\mR^{n}\setminus B_{R}$ (for some $R>0$) is outgoing if it satisfies the following Sommerfeld radiation condition: 
\begin{equation*}
\lim_{\abs{x}\rightarrow \infty} \abs{x}^{\frac{n-1}{2}} (\partial_{\abs{x}} v - \bfi \kappa v) = 0,\quad \text{uniformly in all directions $\hat{x}=\frac{x}{\abs{x}} \in \mathcal{S}^{n-1}$,}
\end{equation*}
where $\partial_{\abs{x}} = \hat{x} \cdot \nabla$ denotes the radial derivative. In this case, the far-field pattern $v^{\infty}$ of $v$ is defined by 
\begin{equation*}
v^{\infty}(\hat{x}) := \lim_{\abs{x}\rightarrow \infty} \gamma_{n,\kappa}^{-1} \abs{x}^{\frac{n-1}{2}} e^{-\bfi\kappa\abs{x}} v(x)  \quad \text{for all $\hat{x} \in \mathcal{S}^{n-1}$}
\end{equation*}
for some normalizing constant $\gamma_{n,\kappa} \neq 0$. 
\end{definition*}

The Rellich uniqueness theorem \cite{CK19scattering,Hormander_rellich} implies that
\begin{equation*}
v^{\infty} \equiv 0 \quad \text{if and only if} \quad v=0 \text{ in $\mR^{n}\setminus\overline{\Omega}$.}
\end{equation*}
We are interested in the following question: does the anisotropic medium $(\Omega, A, \rho)$ scatter every incoming wave nontrivially, or can there be some incoming wave that produces no scattering (i.e.\ $u^{\rm sc}$ has zero far-field pattern)? The rigorous analysis of this phenomenon was initiated for $A = \mathrm{Id}$ in \cite{BPS14CornerScattering}, which showed that corners in the scattering obstacle $\Omega$ might always scatter every incoming wave nontrivially. Similar corner scattering results and related single measurement uniqueness results have been proved in various other settings (see e.g.\ \cite{HSV16SingleMeasurement, PSV17CornerScattering, ElschnerHu18, BlastenLiu21} and the survey \cite{Liu22}). The works \cite{CV21SingularitiesAlwaysScatter, SS21NonscatteringFreeBoundary} introduced powerful new methods from free boundary problems to this setting, allowing one to deal with obstacles with Lipschitz or less regular boundaries. The anisotropic case was studied in \cite{CVX23RegularityITEP}.

The main feature of this work is to show that the anisotropic non-scattering problem can be related to a Bernoulli problem in free boundary theory. We will use methods from Bernoulli problems to improve the results in \cite{CVX23RegularityITEP} to the case of obstacles with Lipschitz boundaries, thus covering the case of actual corners.

More precisely, if the anisotropic medium $(\Omega,A,\rho)$ is non-scattering with respect to the incident field $u^{\rm inc}$ in the sense of $u^{\rm sc}=0$ in $\mR^{n}\setminus\overline{\Omega}$, then the pair $(u^{\rm inc},u^{\rm to}) \in H_{\rm loc}^{1}(\mR^{n}) \times H_{\rm loc}^{1}(\mR^{n})$ satisfies the following problem (similar to the interior transmission eigenvalue problem):
\begin{equation}
\begin{cases}
\left( \mL + \kappa^{2}\rho(x) \right) u^{\rm to} = 0, \quad (\Delta + \kappa^{2})u^{\rm inc} = 0, & \text{in $\Omega$,} \\
u^{\rm to} = u^{\rm inc} , \quad \nu \cdot A(x)\nabla u^{\rm to} = \partial_{\nu} u^{\rm inc}, & \text{on $\partial \Omega$,}
\end{cases} \label{eq:ITEP}
\end{equation} 
where $\mL = \nabla\cdot A(x) \nabla$, $\nu$ is the \emph{inward} unit normal vector to $\partial\Omega$ (we choose this orientation for later convenience) and $\partial_{\nu} = \nu\cdot \nabla$ is the normal derivative in the sense of \cite[Theorem~5.8.1]{EG15MeasureTheory}. One also sees that the scattered field $u^{\rm sc} := u^{\rm to} - u^{\rm inc} \in H_{\rm loc}^{1}(\mR^{n})$ satisfies 
\begin{equation}
\begin{cases}
(\mL + \kappa^{2}\rho(x)) u^{\rm sc} = -(\mL + \kappa^{2}\rho(x)) u^{\rm inc} & \text{in $\Omega$,} \\
u^{\rm sc}=0 ,\quad \nu \cdot A \nabla u^{\rm sc} = \nu \cdot (\Id - A) \nabla u^{\rm inc} & \text{on $\partial\Omega$.}
\end{cases} \label{eq:ITEP-scattering}
\end{equation}

The equation presented in \eqref{eq:ITEP-scattering} portrays a classical instance of a free boundary problem known as the Bernoulli type, which has garnered attention over the course of numerous decades from diverse vantage points. Of specific relevance to our inquiry is the examination of particular outcomes, with a focus on the smoothness of $\partial \Omega$ under certain a priori smoothness assumptions, such as Lipschitz continuity. This constitutes the central subject matter of the present paper.

\subsection{Main results} \label{sec_main}

Now we state our main results.

\begin{theorem}\label{THM:main1}
Let $\Omega$ be a bounded Lipschitz domain in $\mathbb{R}^{n}$ $(\text{where } n \ge 2)$, let $\rho \in L^{\infty}(\Omega)$ be a positive real-valued function, and let $A \in (C^{0,1}(\overline{\Omega}))^{n\times n}$ be a real symmetric matrix-valued function satisfying the condition of uniform ellipticity \eqref{eq:unifurm-ellipticity}. 
Suppose that the anisotropic medium $(\Omega,A,\rho)$ is non-scattering with respect to $u^{\rm inc}$ in the sense of \eqref{eq:ITEP}. For $x_{0} \in \partial\Omega$, suppose that $A$ has a $C^{1}$-extension near $x_{0}$ and suppose that $\rho$ has a $C^0$-extension near $x_{0}$. Suppose further that one of the following non-degeneracy conditions holds:
\begin{equation}
\begin{cases}
\text{$\nu \cdot (\rm Id - A) \nabla u^{\rm inc} \ge c > 0$ Hausdorff-a.e.\ on $\partial\Omega$ near $x_{0}$; or} \\
\text{$\nu \cdot (\rm Id - A) \nabla u^{\rm inc} \le -c < 0$ Hausdorff-a.e.\ on $\partial\Omega$ near $x_{0}$.}
\end{cases} \label{EQ:non-degeneracy-condition}
\end{equation}
Then $u^{\rm sc}$ is Lipschitz continuous and $\partial\Omega$ is $C^{1,\alpha}$ near $x_{0}$. 
\end{theorem}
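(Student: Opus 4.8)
The plan is to reduce the scattering problem to a two-phase Bernoulli free boundary problem and then invoke the regularity theory for such problems. First I would rewrite the transmission system \eqref{eq:ITEP-scattering} in a form amenable to free boundary methods. The scattered field $u^{\rm sc}$ vanishes on $\partial\Omega$ from the exterior (trivially, since $u^{\rm sc}\equiv 0$ there) and satisfies a uniformly elliptic equation $(\mL+\kappa^{2}\rho)u^{\rm sc}=f$ in $\Omega$ with $f=-(\mL+\kappa^{2}\rho)u^{\rm inc}\in L^{\infty}$ locally near $x_{0}$ (here the $C^{1}$-extension of $A$ and $C^{0}$-extension of $\rho$, together with the analyticity/smoothness of solutions of the Helmholtz equation, are what guarantee $f$ is bounded). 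The Neumann jump condition $\nu\cdot A\nabla u^{\rm sc}=\nu\cdot(\mathrm{Id}-A)\nabla u^{\rm inc}$ on $\partial\Omega$, combined with the non-degeneracy condition \eqref{EQ:non-degeneracy-condition}, says precisely that the normal derivative of $u^{\rm sc}$ from inside $\Omega$ is bounded below (or above) by a positive constant $c>0$ in a Hausdorff-a.e. sense. Thus, after possibly replacing $u^{\rm sc}$ by $-u^{\rm sc}$, the function $u:=u^{\rm sc}\chi_{\Omega}$ (extended by zero outside) is a nonnegative subsolution-type object whose free boundary is $\partial\Omega$ and which satisfies, in a weak/viscosity sense, a Bernoulli condition $\abs{\nabla u}\ge c$ on the free boundary.

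Next I would set up the appropriate weak formulation — I expect the paper uses the notion of a minimizer or a viscosity/variational solution of a one-phase Bernoulli problem with a variable-coefficient operator and a (bounded, possibly sign-changing) right-hand side, of the type studied in the Alt–Caffarelli theory and its variable-coefficient and inhomogeneous extensions. The key structural facts to establish are: (i) $u$ is (locally) Lipschitz continuous up to and across the free boundary; (ii) $u$ is nondegenerate near $\partial\Omega$, i.e. $\sup_{B_{r}(x_{0})}u\ge c'r$; and (iii) $\partial\Omega$ has positive density from both sides (the medium fills a Lipschitz domain, so this is given). Step (i) is typically obtained by combining interior elliptic estimates with the flatness coming from $u\equiv 0$ outside and a barrier/comparison argument exploiting the one-sided bound on the normal derivative; the inhomogeneous term $f\in L^{\infty}$ contributes only lower-order perturbations handled by subtracting a Newtonian-type potential. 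Step (ii) follows from the positive lower bound $c>0$ in the Bernoulli condition via a standard comparison-function argument.

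With Lipschitz regularity and nondegeneracy in hand, the final step is to upgrade $\partial\Omega$ from Lipschitz to $C^{1,\alpha}$. Here the crucial input is that $\partial\Omega$ is already assumed Lipschitz, so by Rademacher's theorem it has a measure-theoretic normal Hausdorff-a.e.; I would then use the blow-up analysis: rescalings $u_{r}(x)=u(x_{0}+rx)/r$ converge (along subsequences) to a global Lipschitz solution of the constant-coefficient one-phase Bernoulli problem on a Lipschitz cone, and a Liouville-type classification (e.g. the result that a global one-phase solution on a half-space-like cone with the Bernoulli condition must be the half-plane solution $c(x\cdot e)_{+}$) forces the cone to be a half-space. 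A Lipschitz free boundary whose every blow-up is a hyperplane is $C^{1}$, and then the classical Bernoulli regularity bootstrap (Caffarelli's improvement-of-flatness, or the De Silva–Savin machinery for the inhomogeneous variable-coefficient case) promotes $C^{1}$ to $C^{1,\alpha}$. The main obstacle I anticipate is step (i)–(ii) in the variable-coefficient, inhomogeneous, merely-Lipschitz-domain setting: proving the Lipschitz bound for $u^{\rm sc}$ up to the free boundary without any a priori $C^{1}$ control on $\partial\Omega$, since the usual Alt–Caffarelli estimates presuppose a variational minimality that must here be extracted from the transmission/non-degeneracy conditions rather than assumed; handling the sign-changing right-hand side $f$ and the anisotropy of $A$ simultaneously will require care, and the blow-up classification on a Lipschitz cone (rather than a smooth one) is the technically delicate point where the Lipschitz-but-not-$C^{1,\alpha}$ dichotomy of the theorem is actually decided.
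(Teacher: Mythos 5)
Your overall architecture (recast \eqref{eq:ITEP-scattering} as a one-phase Bernoulli problem with right-hand side $h\,\mathscr{L}^{n}\lfloor\Omega + (\nu\cdot A\mathbf{V})\,\mH^{n-1}\lfloor\partial\Omega$, prove Lipschitz continuity and nondegeneracy, then upgrade the Lipschitz free boundary to $C^{1,\alpha}$) matches the paper's strategy. But there is one genuine gap. You write that ``after possibly replacing $u^{\rm sc}$ by $-u^{\rm sc}$, the function $u:=u^{\rm sc}\chi_{\Omega}$ is a nonnegative subsolution-type object.'' This is not automatic and is in fact the technical heart of the paper. The non-degeneracy condition \eqref{EQ:non-degeneracy-condition} fixes the sign only of the distributional Neumann datum $\nu\cdot(\mathrm{Id}-A)\nabla u^{\rm inc}$ on $\partial\Omega$; it says nothing a priori about the sign of $u^{\rm sc}$ inside $\Omega$, which is merely an $H^{1}$ (then H\"older, then Lipschitz) function on a Lipschitz domain and could oscillate in sign arbitrarily close to the boundary. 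The paper devotes Lemmas~\ref{LEM:monotonicity-lemma}--\ref{LEM:positive-near-boundary} (and the entire Appendix) to exactly this point: a variable-coefficient Alt--Caffarelli--Friedman almost-monotonicity formula gives $r^{-4}\int\frac{|\nabla w_{+}|^{2}}{|x|^{n-2}}\int\frac{|\nabla w_{-}|^{2}}{|x|^{n-2}}\le Cr^{\theta}$, which combined with the linear nondegeneracy of the mean value of $w$ (Lemma~\ref{LEM:non-degeneracy-result}, proved via the mean value theorem for divergence-form operators, not a barrier) forces $\sup_{B_{r}}w_{-}=o(r)$ and hence, by a blow-up and the strong minimum principle, positivity of $w$ near $\Gamma$. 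Without this step your reduction to a one-phase problem does not get off the ground; the paper itself flags the possible sign change as the major difference from the classical two-phase Bernoulli setting.

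Two smaller divergences are worth noting. First, the Lipschitz bound for $u^{\rm sc}$ is obtained in the paper not by barriers but by a compactness/contradiction argument: rescalings converge to a constant-coefficient solution vanishing on an open set (the Lipschitz subgraph $\Lambda$), which unique continuation kills. Second, for the final regularity step the paper does not classify blow-ups on Lipschitz cones; it verifies that $w$ is a viscosity solution in the sense of De Silva--Ferrari--Salsa by establishing the asymptotic development $w=\beta((x-x_{0})\cdot\nu_{0})_{+}+o(|x-x_{0}|)$ at points touched by interior balls and identifying $\beta=\nu_{0}\cdot A(x_{0})\mathbf{V}(x_{0})$ through an integral identity with the auxiliary potentials $\tilde{\psi}_{2r}$, and then directly invokes their ``Lipschitz implies $C^{1,\alpha}$'' theorem for two-phase problems with distributed sources. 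Your Liouville-classification route is plausible in spirit but replaces a citable off-the-shelf theorem with the hardest part of Caffarelli's Harnack-inequality approach, which you would then have to reprove in the variable-coefficient inhomogeneous setting.
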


We remark that if $\partial\Omega$ is $C^{1}$ near $x_{0}$, then the normal vector $\nu$ defines a continuous vector field on $\partial\Omega$ near $x_{0}$. In this case, \eqref{EQ:non-degeneracy-condition} can be replaced by 
\begin{equation*}
\nu \cdot (\rm Id - A) \nabla u^{\rm inc}(x_0) \neq 0.
\end{equation*}
The above result shows that if 
$\p \Omega$  is not $C^{1,\alpha}$ near $x_0$ and if the non-degeneracy condition \eqref{EQ:non-degeneracy-condition} holds, then the obstacle scatters $u^{\mathrm{inc}}$ non-trivially. We can summarize the above result as ``corners conditionally always scatter'', compare to \cite{BPS14CornerScattering} and subsequent works.

Combining our result with \cite[Theorem~2.1]{CVX23RegularityITEP}, we conclude the following corollary. 

\begin{corollary}
Suppose that all assumptions in Theorem~{\rm \ref{THM:main1}} hold. If we further assume 
\begin{equation*}
A \in (C^{\ell+1,\alpha}(\overline{\Omega}))^{n\times n} \text{ and } \rho \in C^{\ell,\alpha}(\overline{\Omega})
\end{equation*}
for some $\ell \in \mathbb{N}$, then $\partial\Omega$ is $C^{\ell+1,\alpha}$ near $x_{0}$. In addition, if $A$ and $\rho$ are both smooth $(\text{resp.\ real analytic})$ in $\overline{\Omega}$, then $\partial\Omega$ is smooth $(\text{resp.\ real analytic})$ near $x_{0}$. 
\end{corollary}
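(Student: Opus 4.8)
The plan is to bootstrap from Theorem~\ref{THM:main1}, using the higher-regularity theory already recorded in \cite[Theorem~2.1]{CVX23RegularityITEP}. First I would note that Theorem~\ref{THM:main1} already supplies the base case: under the stated hypotheses $\partial\Omega$ is $C^{1,\alpha}$ near $x_0$ and $u^{\rm sc}$ is Lipschitz there. Because $\partial\Omega$ is now $C^1$ near $x_0$, the inward unit normal $\nu$ is a continuous vector field there, so by the remark following Theorem~\ref{THM:main1} the non-degeneracy condition \eqref{EQ:non-degeneracy-condition} is equivalent to the pointwise condition $\nu\cdot(\Id-A)\nabla u^{\rm inc}(x_0)\neq 0$; since $\nu$ and $A$ are continuous near $x_0$ and $u^{\rm inc}$ is real analytic (being a solution of \eqref{eq:incident-field}), this persists on a full relative neighbourhood of $x_0$ in $\partial\Omega$. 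Thus $(u^{\rm inc},u^{\rm to})$ solves the transmission system \eqref{eq:ITEP}, equivalently the one-phase Bernoulli problem \eqref{eq:ITEP-scattering}, across a $C^{1,\alpha}$ interface along which the conormal jump of $u^{\rm sc}$ does not vanish --- which is precisely the setting of \cite[Theorem~2.1]{CVX23RegularityITEP}.

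Second, I would feed this into \cite[Theorem~2.1]{CVX23RegularityITEP}: with the extra regularity $A\in(C^{\ell+1,\alpha}(\overline{\Omega}))^{n\times n}$ and $\rho\in C^{\ell,\alpha}(\overline{\Omega})$ that theorem yields $\partial\Omega\in C^{\ell+1,\alpha}$ near $x_0$, which is the first assertion of the corollary. The mechanism behind it is the classical free-boundary bootstrap: write $\partial\Omega$ locally as a graph $x_n=\varphi(x')$ with $\varphi\in C^{1,\alpha}$, flatten the interface by a change of variables, and recast \eqref{eq:ITEP-scattering} as a nonlinear elliptic transmission (oblique-derivative) problem in which $\varphi$ is one of the unknowns; the non-degeneracy condition \eqref{EQ:non-degeneracy-condition} is exactly what makes the linearized boundary operator satisfy the Lopatinskii--Shapiro complementing condition. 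Schauder estimates then give $\varphi\in C^{2,\alpha}$ and $u^{\rm sc}\in C^{1,\alpha}$ up to $\partial\Omega$, and one iterates, each round trading one more derivative of $(A,\rho)$ for one more derivative of $\varphi$, until the finite regularity of the coefficients is exhausted.

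Third, for the $C^\infty$ statement I would simply run this iteration without stopping, obtaining $\partial\Omega\in C^m$ near $x_0$ for every $m$, hence $\partial\Omega\in C^\infty$; for the real-analytic statement, once $\partial\Omega$ is known to be $C^\infty$ the flattened nonlinear elliptic system has real-analytic coefficients and data, so analytic elliptic regularity (Morrey--Nirenberg) promotes $\varphi$, and therefore $\partial\Omega$, to real analytic near $x_0$ --- again as contained in \cite[Theorem~2.1]{CVX23RegularityITEP}. I expect the one genuinely delicate point to be bookkeeping: checking that the $C^{1,\alpha}$ conclusion of Theorem~\ref{THM:main1} together with our non-degeneracy condition \eqref{EQ:non-degeneracy-condition} matches verbatim the hypotheses under which \cite[Theorem~2.1]{CVX23RegularityITEP} is stated (in particular, that no further a priori regularity of $u^{\rm sc}$ or $\partial\Omega$ is secretly needed to start their argument). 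Granting this, the corollary follows with no extra work.
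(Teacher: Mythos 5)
Your proposal is correct and follows essentially the same route as the paper: the paper's entire argument for this corollary is to combine the $C^{1,\alpha}$ conclusion of Theorem~\ref{THM:main1} with the higher-order bootstrap of \cite[Theorem~2.1]{CVX23RegularityITEP}, exactly as you do. The additional detail you supply about the flattening/Schauder mechanism inside that cited theorem is not needed for the paper's argument but is consistent with it.
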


We can also give an application to radiating and nonradiating sources. The investigation of such sources -- for acoustic, electromagnetic and elastic waves -- has a long history, see e.g.\ \cite[Section~2.3]{KW21CharacterizeNonradiating} for related works. 
We say that the pair $(g,h) \in H^{-\frac{1}{2}}(\partial\Omega) \times L^{2}(\Omega)$ is a \emph{nonradiating source} if the unique outgoing solution $w \in H_{\rm loc}^{1}(\mR^{n})$ satisfies $w = 0$ in $\mR^{n}\setminus\overline{\Omega}$, more precisely, 
\begin{equation}
\left\{\begin{aligned}
&(\mL + \kappa^{2}\rho(x))w = h && \text{in $\Omega$,} \\
&w = 0 && \text{in $\mR^{n}\setminus\overline{\Omega}$,} \\
&(\partial_{\nu} w)_{\rm int} = g && \text{on $\partial\Omega$.} 
\end{aligned}\right. \label{eq:transmission-problem-nonradiating}
\end{equation}
Here we also point out that the interior transmission eigenvalue problem considered in \cite{DDL22ITEP} can be written in the form \eqref{eq:transmission-problem-nonradiating} for some suitable $(g,h)$, see also \eqref{eq:transmission-problem} below. We have the following theorem. 

\begin{theorem}\label{THM:non-radiating-source1}
Let $\Omega$ be a bounded Lipschitz domain in $\mathbb{R}^{n}$ $(\text{where } n \ge 2)$, let $h \in L^{\infty}(\Omega)$ be a positive real-valued function, and let $A \in (C^{0,1}(\Omega))^{n\times n}$ be a real symmetric matrix-valued function satisfying the condition of uniform ellipticity \eqref{eq:unifurm-ellipticity}. Let $w$ solve the system \eqref{eq:transmission-problem-nonradiating}. For $x_{0} \in \partial \Omega$, suppose that $A$ has a $C^{1}$-extension near $x_{0}$ and suppose that $h$ has a $C^0$-extension near $x_{0}$. Assume that $g = \nu \cdot A \bfV$,  for some  Lipschitz continuous vector field $\bfV$ that is transversal to $\partial\Omega$ and satisfies 
\begin{equation*}
\nu \cdot A \bfV \ge c_{3} \text{ in a region of $\Omega$ near $x_{0}$} \quad \text{or} \quad \nu \cdot A \bfV \le - c_{3} \text{ in a region of $\Omega$ near $x_{0}$.}
\end{equation*}
Then the function $w$ is Lipschitz continuous, and the boundary $\partial\Omega$ is $C^{1,\alpha}$ near $x_{0}$. 
\end{theorem}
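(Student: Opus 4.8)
The plan is to reinterpret \eqref{eq:transmission-problem-nonradiating} as a one-phase Bernoulli free boundary problem localized near $x_{0}$ and then to run the free boundary regularity machinery, exactly as for Theorem~\ref{THM:main1}. Near $x_{0}$ the system \eqref{eq:transmission-problem-nonradiating} has the same structure as the scattering system \eqref{eq:ITEP-scattering}: $w$ plays the role of $u^{\rm sc}$, the $L^{\infty}$ source $h-\kappa^{2}\rho w$ plays the role of $-(\mL+\kappa^{2}\rho)u^{\rm inc}$, and the transversal Lipschitz field $\bfV$ plays the role of $(A^{-1}-\Id)\nabla u^{\rm inc}$; indeed $\nu\cdot(\Id-A)\nabla u^{\rm inc}=\nu\cdot A(A^{-1}-\Id)\nabla u^{\rm inc}$, so the non-degeneracy \eqref{EQ:non-degeneracy-condition} is of the same type as the hypothesis here on $\nu\cdot A\bfV$. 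Concretely, I would fix a ball $B=B_{r}(x_{0})$ on which $A$ has a $C^{1}$ extension and $h$ a $C^{0}$ extension, extend $w$ by zero to $B$, and record that --- since $w$ and its trace vanish on $\partial\Omega$ from both sides while only the interior conormal derivative jumps, by the amount $g$ --- the extended $w$ solves in $\mDp(B)$
\begin{equation*}
\mL w = \bigl(h-\kappa^{2}\rho w\bigr)\chi_{\Omega} + g\, \mathcal{H}^{n-1}\lfloor \partial\Omega .
\end{equation*}
By hypothesis $g=\nu\cdot A\bfV$ with $\bfV$ Lipschitz and transversal, so after shrinking $r$ and possibly replacing $w$ by $-w$ one has $0<c\le g\le C$ $\mathcal{H}^{n-1}$-a.e.\ on $\partial\Omega\cap B$, and together with $h>0$ this pins the sign of $w$, say $w\ge 0$, in the part of $\Omega$ adjacent to $\partial\Omega\cap B$. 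Thus, locally near $x_{0}$, $w$ solves a one-phase Bernoulli problem for $\mL$ with bounded right-hand side, whose free boundary is a piece of the a priori Lipschitz hypersurface $\partial\Omega$ and whose free boundary datum is non-degenerate.

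The next step is the optimal (Lipschitz) regularity of $w$ near $x_{0}$. Non-degeneracy from below, $\sup_{B_{\rho}(x_{0})}w\ge c\rho$, follows from $g\ge c>0$ by the usual comparison argument for one-phase problems. For the matching upper bound $\abs{w(x)}\le C\dist(x,\partial\Omega)$ the essential point is that $w$ vanishes identically on the complement of $\overline{\Omega}$, not merely on $\partial\Omega$: linear growth then follows by the standard comparison and density estimates for one-phase problems, or alternatively by a compactness argument rescaling $w$ at free boundary points, where a failure of linear growth would produce an unbounded global one-phase solution with positivity set inside a Lipschitz cone, contradicting a Liouville theorem. This gives $w\in C^{0,1}(B_{r/2}(x_{0}))$, so in particular $h-\kappa^{2}\rho w\in L^{\infty}(B_{r/2}(x_{0}))$.

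With $w$ Lipschitz I would upgrade $\partial\Omega$ from Lipschitz to $C^{1,\alpha}$ near $x_{0}$ by a blow-up and improvement-of-flatness argument. Rescaling $w_{t}(x)=w(x_{0}+tx)/t$, the bounded source and the lower order term scale away and $A(x_{0}+t\,\cdot\,)\to A(x_{0})$, so every blow-up limit $w_{0}$ is, after the linear change of variables turning $A(x_{0})$ into $\Id$, a global one-phase Bernoulli solution for the Laplacian with constant positive flux along its free boundary; a Weiss-type monotonicity formula (valid here, the coefficient and source errors being subcritical) forces $w_{0}$ to be homogeneous of degree one, so $\{w_{0}>0\}$ is a cone contained in the Lipschitz blow-up cone of $\Omega$ at $x_{0}$. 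The decisive ingredient is then a rigidity statement: a degree-one homogeneous global one-phase solution whose positivity set is a Lipschitz cone and whose flux is a positive constant along the free boundary must be a half-space solution $w_{0}=g(x_{0})\,\langle x-x_{0},e\rangle^{+}$. Granting this, the blow-up is flat, De Silva-type $\varepsilon$-regularity for one-phase problems with right-hand side and $C^{1}$ coefficients promotes $\partial\Omega$ to $C^{1,\alpha}$ near $x_{0}$, and Schauder estimates then give $w\in C^{1,\alpha}$ up to $\partial\Omega$; in particular $w$ is Lipschitz, as claimed.

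I expect the rigidity step --- excluding non-flat Lipschitz blow-up cones --- to be the main obstacle, and it is exactly here that the transversality/non-degeneracy hypothesis is essential: along the edges of a genuine corner the flux of a degree-one homogeneous solution cannot remain pinched between two positive constants, so a corner is incompatible with the free boundary condition $c\le g\le C$. Secondary technical points are that the Weiss formula and the $\varepsilon$-regularity theorem must be run with $A$ merely Lipschitz (handled by freezing the coefficients at $x_{0}$ and absorbing the resulting $O(\abs{x-x_{0}})$ error) and with the $L^{\infty}$ right-hand side present at finite scales; and, as noted, the optimal Lipschitz bound for $w$ genuinely exploits the two-sided vanishing of $w$ rather than merely the Lipschitz character of $\partial\Omega$.
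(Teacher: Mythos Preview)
Your overall architecture---localize, recast as a one-phase Bernoulli problem for $\mL$, prove optimal Lipschitz regularity by compactness, then upgrade the free boundary to $C^{1,\alpha}$---is the same as the paper's, and the Lipschitz step you sketch (blow-up/contradiction using that $w\equiv 0$ on the complement, not just on $\partial\Omega$) is exactly Lemma~\ref{LEM:Lipschitz-continuity}. Two points, however, diverge from the paper, and the first is a genuine gap.

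\textbf{Positivity of $w$ near the free boundary.} The sentence ``together with $h>0$ this pins the sign of $w$, say $w\ge 0$'' is not an argument: the equation for $w$ in $\Omega$ is $\mL w = h-\kappa^{2}\rho w$, and neither $g\ge c>0$ nor $h>0$ by themselves prevent $w$ from changing sign near $\partial\Omega$. In the paper this is the most delicate step. One first proves non-degeneracy of $w_{+}$ from $g\ge c_{3}$ (Lemma~\ref{LEM:non-degeneracy-result}), then an Alt--Caffarelli--Friedman type almost-monotonicity formula for the pair $(w_{+},w_{-})$ with variable $C^{\alpha}$ coefficients (Lemma~\ref{LEM:monotonicity-lemma}), which combined with non-degeneracy forces $r^{-1}\sup_{B_{r}(x_{0})}w_{-}\to 0$ (Lemma~\ref{LEM:negative-part-decay}); a final compactness argument then gives $w>0$ in a uniform neighborhood of $\Gamma$ (Lemma~\ref{LEM:positive-near-boundary}). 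The ACF-type input is essential here and is absent from your plan.

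\textbf{The $C^{1,\alpha}$ step.} The paper does \emph{not} go through Weiss monotonicity, homogeneous blow-ups, or a cone-rigidity classification. Instead, once $w>0$ is known near $\Gamma$, it checks directly that $w$ is a viscosity solution of the one-phase problem: at any point $x_{0}\in\Gamma$ touched from inside by a ball, Lemmas~\ref{LEM:non-degeneracy-result} and~\ref{LEM:positive-near-boundary} give the expansion $w(x)=\beta\big((x-x_{0})\cdot\nu_{0}\big)_{+}+o(|x-x_{0}|)$, and an explicit averaging computation identifies $\beta=\nu_{0}\cdot A(x_{0})\bfV(x_{0})$. With the viscosity free boundary condition in hand, the paper simply invokes \cite[Theorem~1.4]{DSFS14TwoPhaseProblemLinear}, which already contains the statement ``Lipschitz free boundary $\Rightarrow C^{1,\alpha}$'' for this class of problems. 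Thus the rigidity of Lipschitz blow-up cones that you flag as the main obstacle is not something you need to reprove---it is packaged inside the De~Silva--Ferrari--Salsa theorem you eventually cite anyway. Your route through Weiss and blow-up classification could in principle be made to work, but it is strictly more machinery than the problem requires.
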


The above theorems will be proven in Section \ref{Sec:2}. Here $w$ plays the role of $u^{\rm sc}$ in \eqref{eq:ITEP-scattering}. 

\subsection{Further directions}

As in \cite{CVX23RegularityITEP}, we will now consider examples of domains that are non-scattering for some incident waves in the sense discussed above.

\begin{example} \label{example1}
Let $\Omega$ be a bounded domain in $\mathbb{R}^2$, and suppose that $\kappa > 0$ is such that there is a nontrivial global solution $w$ of $(\Delta + \kappa^2)w=0$ in $\mathbb{R}^2$ with $w|_{\partial \Omega} = 0$. (With minor modifications one could also work with $\partial_{\nu} w|_{\partial \Omega} = 0$.) Then necessarily $\kappa^2$ is a Dirichlet eigenvalue of $-\Delta$ in $\Omega$. If $a \neq 1$ is a constant and if we take $A = a \, \mathrm{Id}$ and $\rho=a$, then $u = w$ and $v = aw$ satisfy the analogue of \eqref{eq:ITEP}:
\begin{equation}
\begin{cases}
\left( \mL + \kappa^{2}\rho(x) \right) u = 0, \quad (\Delta + \kappa^{2})v = 0, & \text{in $\Omega$,} \\
u = v , \quad \nu \cdot A(x)\nabla u = \partial_{\nu} v, & \text{on $\partial \Omega$.}
\end{cases} \label{eq:ITEP_polygon}
\end{equation} 
Thus the isotropic medium $(\Omega, A, \rho)$ is non-scattering for the incident wave $w$. 

In \cite[Section 3]{CVX23RegularityITEP} one chose $\Omega = (0,1)^2$ to be the unit square and $w(x) = \sin(p\pi x_1) \sin(q \pi x_2)$ for $p, q \in \mathbb{Z} \setminus \{0\}$. Let us show that one can have such non-scattering domains with corners of angle $\ell \pi/m$ for any integers $m \geq 2$ and $1 \leq \ell < 2m-1$. (The angles must be of this form since the zero set of a nontrivial solution $w$ of a second order elliptic equation in $\mathbb{R}^2$ is locally the union of $m$ curves that intersect at angles $\pi/m$, see e.g.\ \cite{LogunovMalinnikova2020}.) Let $(r,\theta)$ be polar coordinates in $\mathbb{R}^2$ with $(x_{1},x_{2})=(r\cos\theta,r\sin\theta)$ and let $\Omega = \{ \, 0 < r < 1, \ 0 < \theta < \ell \pi/m \}$ be a sector domain. The eigenfunctions of the Laplacian on $\Omega$ are known \cite{GrebenkovNguyen2013}. Let $(\alpha_k)$ be the positive zeros of the Bessel function $J_m$, and define 
\[
w(r,\theta) = J_m(\alpha_k r) \sin(m\theta).
\] 
Writing $z = re^{i\theta}$ we have $w(z) = |z|^{-m} J_m(\alpha_k |z|) \mathrm{Im}(z^m)$, which is a smooth function in $\mathbb{R}^2$ by properties of $J_m$. One also has $(\Delta + \alpha_k^2) w = 0$ in $\mathbb{R}^2$ and $w|_{\partial \Omega} = 0$. It follows that $\Omega$ is a non-scattering domain for the incident wave $w$. 

The fact that such non-scattering corner domains exist does not contradict Theorem \ref{THM:main1}, since $\nabla w(x_i) = 0$ at each corner point $x_i$ of $\Omega$ (with $w$ having a zero of order $m$ at $0$) and hence the incident wave $w$ does not satisfy the non-degeneracy condition \eqref{EQ:non-degeneracy-condition}. We also note that in this example both $A$ and $\rho$ have a jump at $\partial \Omega$. If only $\rho$ has a jump but $A$ does not, non-scattering corner domains may not exist in $\mathbb{R}^{n}$ e.g.\ by \cite{ElschnerHu18, CX21CornerScatteringEllipticOperator}.

To study the Bernoulli condition satisfied by $w$, we compute $\nabla w$ on $\Gamma := \partial\Omega \setminus \{r=1\}$. By direct computations, one has 
\begin{equation*}
\partial_{r}w = \alpha_{k} J_{m}'(\alpha_{k}r) \sin(m\theta) ,\quad \partial_{\theta}w = mJ_{m}(\alpha_{k}r)\cos(m\theta).
\end{equation*}
It is easy to see that $\left. \partial_{r}w \right|_{\Gamma}=0$. By writing $\Gamma_{0}=\Gamma\cap\{\theta=0\}$ and $\Gamma_{\ell\pi/m}=\Gamma\cap\{\theta=\ell\pi/m\}$, we see that (recall that $\nu$ is pointing inward to $\Omega$)
\begin{equation*}
\begin{aligned}
& \left. \partial_{\theta}w \right|_{\Gamma_{0}} = m J_{m}(\alpha_{k}r) ,\quad \left. \nu \right|_{\Gamma_{0}} = (0,1), \\
& \left. \partial_{\theta}w \right|_{\Gamma_{\ell\pi/m}} = m J_{m}(\alpha_{k}r)(-1)^{\ell} ,\quad \left. \nu \right|_{\Gamma_{\ell\pi/m}} = \left( \sin \frac{\ell\pi}{m} ,-\cos \frac{\ell\pi}{m}  \right).
\end{aligned}
\end{equation*}
Since  
\begin{equation*}
\partial_{x_{1}}w = \cos\theta \partial_{r}w - \frac{\sin\theta}{r}\partial_{\theta}w ,\quad \partial_{x_{2}}w = \sin\theta \partial_{r}w + \frac{\cos\theta}{r}\partial_{\theta}w,
\end{equation*}
then 
\begin{equation*}
\left. \partial_{\nu}w \right|_{\Gamma_{0}} = \left. \partial_{x_{2}}w \right|_{\Gamma_{0}} = \left. \frac{1}{r}\partial_{\theta}w \right|_{\Gamma_{0}} = \frac{m}{\abs{x}}J_{m}(\alpha_{k}\abs{x}) \sim \frac{\alpha_{k}^{m}}{2^{m}(m-1)!}\abs{x}^{m-1} \quad \text{near $x=0$,}
\end{equation*}
and 
\begin{equation*}
\begin{aligned}
& \left. \partial_{\nu}w \right|_{\Gamma_{\ell\pi/m}} = \sin \frac{\ell\pi}{m} \left. \partial_{x_{1}}w  - \cos \frac{\ell\pi}{m}\partial_{x_{2}}w \right|_{\Gamma_{\ell\pi/m}} = - \left. \frac{1}{r}\partial_{\theta}w \right|_{\Gamma_{\ell\pi/m}} \\
& \quad = (-1)^{\ell+1}\frac{m}{\abs{x}}J_{m}(\alpha_{k}\abs{x}) \sim (-1)^{\ell+1} \frac{\alpha_{k}^{m}}{2^{m}(m-1)!}\abs{x}^{m-1} \quad \text{near $x=0$.}
\end{aligned}
\end{equation*}
Then the Bernoulli boundary condition on $\partial\Omega$ near the origin is  
\begin{equation}
\abs{\nabla w(x)} = \frac{m}{\abs{x}}\abs{J_{m}(\alpha_{k}\abs{x})} \sim \frac{\alpha_{k}^{m}}{2^{m}(m-1)!} \abs{x}^{m-1} \quad \text{for all $x\in\partial\Omega$ near $x=0$.} \label{eq:Bernuolli-example}
\end{equation}
Moreover, $\left. \partial_{\nu}w \right|_{\partial\Omega}$ does not change sign near $x=0$ when $\ell$ is odd, but it changes sign when $\ell$ is even.
\end{example}

\begin{example} \label{example2}
We will now consider the other example in \cite[Section 3]{CVX23RegularityITEP} based on diffeomorphism invariance. Let $\Omega \subset \mathbb{R}^n$ be a bounded domain, and let $\Phi: \Omega \to \Omega$ be a diffeomorphism such that $\Phi$ and $\Phi^{-1}$ extend smoothly to $\overline{\Omega}$ and $\Phi(x) = x$ for $x \in \partial \Omega$. Let $A = \Phi_*(\mathrm{Id})$ and $\rho = \Phi_*(1)$ be the pushforwards by $\Phi$. Then $v$ solves $(\Delta+\kappa^2)v =0$ in $\Omega$ if and only if $u = \Phi_* v$ solves $(\mL + \kappa^2 \rho)u =0$ in $\Omega$. If $w \not\equiv 0$ solves $(\Delta+\kappa^2)w = 0$ in $\mathbb{R}^n$, then choosing $v = w|_{\Omega}$ and $u = \Phi_* v$ gives a pair $(u,v)$ satisfying \eqref{eq:ITEP_polygon}. Hence $(\Omega,A,\rho)$ is non-scattering for the incident wave $w$.

Suppose that $\partial\Omega$ is piecewise smooth. In this case, the condition $\Phi(x) = x$ for $x \in \partial \Omega$ implies that $D\Phi=\Id$ at the corners of $\partial\Omega$. Then $\left. (\Id-A(x))\nabla w \right|_{\partial\Omega} \equiv 0$ at the corners, so the non-degeneracy condition \eqref{EQ:non-degeneracy-condition} is always violated in such a setting.
\end{example}

Let us compare the above two examples. In Example \ref{example1} the functions $u$ and $v$ came from a function $w$ that solves an elliptic equation near the corner and satisfies the additional condition $w|_{\partial \Omega} = 0$. This additional condition forced the angle of the corner to be a rational multiple of $\pi$. On the other hand, in Example \ref{example2} the functions $u$ and $v$ came from a solution $w$ that was not required to vanish on $\partial \Omega$, and thus the angle of the corner could be any real number. 

Both examples above are related to solutions of a Bernoulli problem. To further explain this point, the next example gives another solution of a Bernoulli problem for the Laplacian where the domain can have a corner of arbitrary angle.

\begin{example}
Let $\alpha > 1/2$ and let $w = \mathrm{Re}(z^{\alpha}) = \mathrm{Re}(e^{\alpha \log z})$ where $\log z$ is the principal branch of the complex logarithm. If $\Omega = \{ r e^{i\theta} \mid r > 0, \, -\frac{\pi}{2\alpha} < \theta < \frac{\pi}{2\alpha} \}$, then $w$ satisfies the Bernoulli problem
\[
\Delta w = 0 \text{ in $\Omega$}, \quad w|_{\partial \Omega} = 0, \quad \p_{\nu} w|_{\{\theta=\pm \pi/(2\alpha)\}} = \alpha r^{\alpha-1}.
\]
One sees that $\p_{\nu} w|_{\partial\Omega}$ vanishes (resp.\ blows up) at 0 of order $\alpha-1$ when $\alpha>1$ (resp.\ $1/2<\alpha<1$). Note that 0 is a corner of $\Omega$ with angle $\pi/\alpha$ when $\alpha \neq 1$.
Of course, $w$ can only be extended as a solution near $0$ when $\alpha$ is an integer (in this case, $\Omega$ has a corner whose angle is a rational multiple of $\pi$).
\end{example}

The concepts employed in proving Theorem~{\rm \ref{THM:main1}} are equally applicable for examining a specific category of transmission problems, linked to the two-phase Bernoulli problem (as discussed in \cite{ACF84TwoPhasesFreeBoundary} or the comprehensive reference \cite{CS05FreeBoundary}). Consider a positive real-valued function $\rho$ within the space $L^{\infty}(\Omega)$. By extending the concepts from \cite[Theorem~2.2.1]{Bondarenko16thesis}, it can be demonstrated that, for each $0 \le \lambda \in L^{\infty}(\partial\Omega)$, $h \in L^{2}(\Omega)$, and $g \in H^{-\frac{1}{2}}(\partial\Omega)$, there exists a unique \emph{outgoing} solution $w \in H_{\rm loc}^{1}(\mathbb{R}^{n})$ to the subsequent transmission problem involving a \emph{conductive transmission condition}:
\begin{equation}
\left\{\begin{aligned}
&(\mL + \kappa^{2}\rho(x))w = h && \text{in $\Omega$,} \\
&(\Delta + \kappa^{2})w = 0 && \text{in $\mR^{n}\setminus\overline{\Omega}$,} \\
&(\nu \cdot A \nabla w)_{\rm int} - (\partial_{\nu} w)_{\rm ext} + \bfi \lambda w = g && \text{on $\partial\Omega$,} 
\end{aligned}\right. \label{eq:transmission-problem}
\end{equation}
where $\nu$ is the \emph{inward} unit normal vector to $\partial\Omega$ and formally we denote 
\begin{equation*}
\begin{aligned}
(\nu \cdot A \nabla w)_{\rm int}(x) &= \lim_{h\rightarrow 0_{+}} \nu(x) \cdot A(x + h\nu(x)) \nabla w(x+h\nu(x)) , \\
(\partial_{\nu} w)_{\rm ext}(x) &= \lim_{h\rightarrow 0_{+}} \nu(x) \cdot \nabla w(x-h\nu(x)) ,
\end{aligned} 
\end{equation*}
for a.e.\ $x\in\partial\Omega$.

When $A \equiv \Id$, the transmission problem \eqref{eq:transmission-problem} is associated with the interaction of a time-harmonic electromagnetic wave with an impermeable non-uniform structure encased by a thin, strongly conductive shell. This occurs under the conditions where the incoming electric field adheres to the transverse magnetic mode (TM-mode), and the derivation for this can be found in \cite[Section~1.2.1]{Bondarenko16thesis}.

A major difference between the above problem and the two-phase Bernoulli free boundary is the possibility of sign-change of solution in \eqref{eq:transmission-problem} within both $\Omega$ and its complement.
In the case of $g  > 0$ close to a boundary point $x_0 \in \partial \Omega$  one can actually show that the function $w$ does not change sign within each component $\Omega$, and $\mR^n \setminus \Omega$. 
This is a deep result in free boundary theory, and uses stronger form of monotonicity lemma (see Lemma~\ref{LEM:monotonicity-lemma} below) for more than two subharmonic functions, see e.g. \cite[Section~7]{ASP17MultiPlaseQD}, \cite[Theorem~3.1]{BFG21MultiPhaseMonotonicityLemma}, \cite[Lemmas~1.2 and 1.3]{CTV05MultiPlase} and \cite[Theorem~1.3]{Velichkov14MultiPlaseQD}. 
We refrain ourselves entering to the discussion here, but hope to get back to this in near future.

\medskip

Finally, we provide some observations regarding the elasticity system. Before introducing the elasticity tensor, let us introduce the notation 
\begin{equation*}
(\mathcal{A}:\mathcal{B})_{ijk\ell} = \sum_{p,q=1}^{n} A_{ijpq} B_{pqk\ell} \quad \text{for two tensors $\mathcal{A}$ and $\mathcal{B}$.}
\end{equation*}
Given an elasticity tensor $\mathcal{C} = (C_{ijk\ell})_{1 \le i,j,k,\ell \le n}$, adhering to major and minor symmetry, the behavior of elastic waves can be described by the equation 
\begin{equation*}
c(x)^{-2} \partial_{t}^{2} \vec{U} - \nabla \cdot (\mathcal{C} : (\nabla \otimes \vec{U})) = 0 \quad \left( \text{i.e. } c(x)^{-2} \partial_{t}^{2} \vec{U}_{i} - \sum_{j,k,\ell} \partial_{j} C_{ijk\ell}(x) \partial_{k} \vec{U}_{\ell} = 0 \right)
\end{equation*}
for the vector-valued function $\vec{U}$. Similarly, for a fixed constant $\kappa > 0$, one can analyze the scattering (with Kupradze radiation condition, see \cite{KW21CharacterizeNonradiating} or the monograph \cite{KGBB79elastic}) of elastic waves, corresponding to solutions of the form $\vec{U}(x,t) = e^{\bfi\kappa t} \vec{u}^{\rm to}$ that satisfy 
\begin{equation*}
\mL^{\mathcal{C}(x)} \vec{u} + \kappa^{2}\rho(x)\vec{u} = 0 \quad \text{in $\mathbb{R}^{n}$}
\end{equation*}
with $\rho = c^{-2}$ and $\mL^{\mathcal{C}(x)} \vec{u} = \nabla \cdot (\mathcal{C}(x) : (\nabla \otimes \vec{u}))$. 
It is worth noting that while the unique continuation property for the general elasticity system remains elusive, this property does hold true when the elasticity tensor is isotropic and adopts the form 
\begin{equation*}
C_{ijk\ell}(x) = C_{ijk\ell}^{\lambda(x),\mu(x)}(x) = \lambda(x)\delta_{ij}\delta_{k\ell} + \mu(x) (\delta_{ik}\delta_{j\ell} + \delta_{i\ell}\delta_{jk}).
\end{equation*}
When $\lambda$ and $\mu$ are constants (in this case they called the Lam\'{e} parameters), one also can write $\mL^{\lambda,\mu}\vec{u} \equiv \mL^{\mathcal{C}} = \mu \Delta \vec{u} + (\lambda+\mu)\nabla({\rm div}\,\vec{u})$.

As in Section \ref{sec_main}, one can investigate an elastic non-scattering problem similar to \eqref{eq:ITEP}:
\begin{equation*}
\begin{cases}
\left( \mL^{\mathcal{C}(x)} + \kappa^{2}\rho(x) \right) \vec{u}^{\rm to} = 0, \quad (\mL^{\lambda,\mu} + \kappa^{2})\vec{u}^{\rm inc} = 0, & \text{in $\Omega$,} \\
\vec{u}^{\rm to} = \vec{u}^{\rm inc} , \quad \nu \cdot \mathcal{C}(x) : (\nabla \otimes \vec{u}^{\rm to}) = \nu \cdot \mathcal{C}^{\lambda,\mu} : (\nabla \otimes \vec{u}^{\rm inc}), & \text{on $\partial \Omega$,}
\end{cases} 
\end{equation*}
as well as an analogue of the transmission problem \eqref{eq:transmission-problem}: 
\begin{equation*}
\left\{\begin{aligned}
&(\mL^{\mathcal{C}(x)} + \kappa^{2}\rho(x))\vec{w} = \vec{h} && \text{in $\Omega$,} \\
&(\mL^{\lambda,\mu} + \kappa^{2})\vec{w} = 0 && \text{in $\mR^{n}\setminus\overline{\Omega}$,} \\
&(\nu \cdot \mathcal{C}(x) : (\nabla \otimes \vec{w}))_{\rm int} - (\nu \cdot \mathcal{C}^{\lambda,\mu} : (\nabla \otimes \vec{w}))_{\rm ext} + \bfi \lambda \vec{w} = \vec{g} && \text{on $\partial\Omega$.} 
\end{aligned}\right. 
\end{equation*}
Here $\nu$ is the \emph{inward} unit normal vector to $\partial\Omega$ and formally we denote the inner and exterior traction operators by 
\begin{equation*}
\begin{aligned}
(\nu \cdot \mathcal{C}(x) : (\nabla \otimes \vec{w}))_{\rm int}(x) &= \lim_{h\rightarrow 0_{+}} \nu(x) \cdot \mathcal{C}(x+h\nu(x)) : (\nabla \otimes \vec{w})(x+h\nu(x)), \\
(\nu \cdot \mathcal{C}^{\lambda,\mu} : (\nabla \otimes \vec{w}))_{\rm int}(x) &= \lim_{h\rightarrow 0_{+}} \nu(x) \cdot \mathcal{C}(x-h\nu(x)) : (\nabla \otimes \vec{w})(x-h\nu(x)),
\end{aligned} 
\end{equation*}
for a.e.\ $x\in\partial\Omega$.
Here, we remind that the traction operator 
\begin{equation*}
\nu \cdot \mathcal{C}(x) : (\nabla \otimes \vec{u}^{\rm inc}) \quad \text{on $\partial\Omega$}
\end{equation*}
is a vector-valued function. Consequently, extending Theorem~\ref{THM:main1} to encompass the realm of elastic waves would require free boundary techniques for strongly coupled systems, which is currently out of reach.

\section{Proof of Theorem~{\rm \ref{THM:main1}} and Theorem~{\rm \ref{THM:non-radiating-source1}}}\label{Sec:2}

For many of the arguments below, we will follow \cite{ACS01FreeBoundaryCalderon}. For each $\epsilon>0$ and $L>0$, we define $Q_{\epsilon} \equiv Q_{\epsilon,L} := \{\abs{x'}<\epsilon\} \times (-2\epsilon L,2\epsilon L)$ and consider the graph 
\begin{equation*}
\Gamma_{\epsilon} \equiv \Gamma_{\epsilon,f} := \left\{ \begin{array}{c|c}(x',x_{n})\in\mR^{n} \times \mR & x_{n}=f(x') \text{ with } \abs{x'}<\epsilon \end{array} \right\}
\end{equation*}
of a Lipschitz function $f$, with $f(0)=0$ and Lipschitz constant $L$. Accordingly, we also define 
\begin{equation*}
\Lambda_{\epsilon} \equiv \Lambda_{\epsilon,f} = Q_{\epsilon,L} \cap \{x_{n} \le f(x')\} ,\quad \Omega_{\epsilon} \equiv \Omega_{\epsilon,f} := Q_{\epsilon,L}\setminus \Lambda_{\epsilon,f}.
\end{equation*}
Let $\nu$ be the unit normal vector to $\Gamma_{1}$ pointing towards the interior of $\Omega_{1}$. We denote $\mH^{n-1}\lfloor\Gamma_{\epsilon}$ the $(n-1)$-dimensional Hausdorff measure on $\Gamma_{\epsilon}$ and denote $\mathscr{L}^{n} \lfloor \Omega_{\epsilon}$ the Lebesgue measure on $\Omega_{\epsilon}$. 

The subsequent lemma can be derived using the exact methodology as outlined in \cite[Lemma~2.1]{ACS01FreeBoundaryCalderon}; the only changes in the proof are that one uses the properties of the fundamental solution given in \eqref{EQ:asymptotic-fundamental-solution} below and observes that $\|w\|_{L^{\infty}(Q_{3/2})}$ can be estimated by $\|w\|_{L^{2}(Q_{2})}$ by an interior elliptic regularity estimate.

\begin{lemma}\label{LEM:Holder-continuity}
Let $n \ge 2$ and $A \in (C^{\alpha}(Q_{2}))_{\rm sym}^{n \times n}$. If $w \in H^{1}(Q_{2})$ satisfies (in the sense of distribution) 
\begin{equation*}
\mL w = h \mathscr{L}^{n}\lfloor Q_{2} + g \mH^{n-1} \lfloor \Gamma_{2} \quad \text{in $Q_{2}$,}
\end{equation*}
for some $g \in L^{\infty}(\Gamma_{2})$ and $h \in L^{\infty}(Q_{2})$, then $w$ is H\"{o}lder continuous in $\overline{Q_{\frac{3}{2}}}$, and the H\"{o}lder constant depends only on $n,L,\alpha,\norm{w}_{L^{2}(Q_{2})}$, $\norm{h}_{L^{\infty}(Q_2)}$ and $\norm{g}_{L^{\infty}(\Gamma_{2})}$. 
\end{lemma}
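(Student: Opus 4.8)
The plan is to follow the potential-theoretic argument of \cite[Lemma~2.1]{ACS01FreeBoundaryCalderon} with two modifications, as the remark preceding the statement already indicates: we use the fundamental solution of the operator $\mL = \nabla \cdot A(x)\nabla$ with $C^\alpha$ coefficients in place of that of the Laplacian, and we absorb the initial $L^\infty$-control of $w$ into an interior elliptic estimate. First I would fix a point $x_0 \in Q_2$ and a small ball $B_r(x_0)$, and split $w = w_1 + w_2$ where $w_1$ is the Newtonian-type potential of the right-hand side $h \mathscr{L}^n\lfloor Q_2 + g\mH^{n-1}\lfloor\Gamma_2$ against the fundamental solution $\Phi(x,y)$ of $\mL$, restricted to $B_{2r}(x_0)$, and $w_2 = w - w_1$ is $\mL$-harmonic in $B_{2r}(x_0)$. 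Because $A$ is merely $C^\alpha$, the right tool is the Schauder-type fundamental solution with the classical bounds $|\Phi(x,y)| \lesssim |x-y|^{2-n}$ (or $\log$ when $n=2$) and $|\nabla_x \Phi(x,y)| \lesssim |x-y|^{1-n}$; these are exactly the properties collected in \eqref{EQ:asymptotic-fundamental-solution}. The volume potential of $h$ is then $C^{1,\alpha}$, hence Lipschitz, with the obvious bound in terms of $\|h\|_{L^\infty(Q_2)}$; the key point is the single-layer potential of $g$ over the Lipschitz graph $\Gamma_2$.

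For the single-layer term, the estimate to establish is the modulus-of-continuity bound
\[
\abs{\int_{\Gamma_2 \cap B_{2r}(x_0)} \Phi(x,y)\,g(y)\,\rmd\mH^{n-1}(y)} \lesssim \|g\|_{L^\infty(\Gamma_2)}\, r,
\]
together with an analogous control on its oscillation, using that $\Gamma_2$ is an $(n-1)$-dimensional Lipschitz graph so that $\mH^{n-1}(\Gamma_2 \cap B_s) \lesssim s^{n-1}$ and $\int_{\Gamma_2 \cap B_s(x)} |x-y|^{1-n}\,\rmd\mH^{n-1}(y) \lesssim s$ uniformly. Combining this with the De Giorgi--Nash--Moser oscillation decay for the $\mL$-harmonic part $w_2$ — namely $\operatorname{osc}_{B_{r/2}(x_0)} w_2 \le C (r/R)^{\beta}\operatorname{osc}_{B_R(x_0)} w_2 + \text{(lower order)}$ for some $\beta = \beta(n,\alpha,c_{\rm ellip}) \in (0,1)$ — gives, via the standard iteration lemma, a Hölder modulus of continuity for $w$ on $\overline{Q_{3/2}}$ with exponent $\min(\alpha,\beta)$ and constant depending only on $n, L, \alpha$ (through $c_{\rm ellip}$), $\|g\|_{L^\infty(\Gamma_2)}$, $\|h\|_{L^\infty(Q_2)}$, and $\|w\|_{L^\infty(Q_{3/2})}$. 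The last quantity is then replaced by $\|w\|_{L^2(Q_2)}$ using the interior estimate $\|w\|_{L^\infty(Q_{3/2})} \le \|w\|_{L^\infty(Q_{7/4})} \le C\|w\|_{L^2(Q_2)}$, which is itself a consequence of the Moser iteration / Caccioppoli machinery applied to the distributional equation (the layer and volume sources being admissible because they lie in the dual Sobolev spaces, the layer term sitting in $H^{-1}$ by the trace theorem on the Lipschitz graph).

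The main obstacle I anticipate is the single-layer potential estimate over a genuinely Lipschitz (not $C^1$) graph: one must verify that the fundamental solution of the variable-coefficient operator, whose off-diagonal behaviour is governed only by Schauder theory, still produces a bounded and continuous single-layer potential on $\Gamma_2$, and that the bound is uniform in the geometry through the Lipschitz constant $L$ alone. This is where the precise asymptotics \eqref{EQ:asymptotic-fundamental-solution} are used, and where one must be careful that the constants do not degenerate as the graph approaches the worst allowed Lipschitz slope. Everything else — the splitting, the harmonic oscillation decay, the iteration, and the passage from $L^\infty$ to $L^2$ control of $w$ — is routine and parallels \cite{ACS01FreeBoundaryCalderon} verbatim once the fundamental solution is in hand.
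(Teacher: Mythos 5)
Your plan is essentially the paper's own proof, which consists precisely of citing \cite[Lemma~2.1]{ACS01FreeBoundaryCalderon} and noting the two modifications you identify: replacing the Laplacian's fundamental solution by the variable-coefficient one with the asymptotics \eqref{EQ:asymptotic-fundamental-solution}, and bounding $\norm{w}_{L^{\infty}(Q_{3/2})}$ by $\norm{w}_{L^{2}(Q_{2})}$ via interior regularity. One small inaccuracy: the auxiliary claim $\int_{\Gamma_2 \cap B_s(x)} |x-y|^{1-n}\,\rmd\mH^{n-1}(y) \lesssim s$ is false (the integral diverges logarithmically near $x$ on an $(n-1)$-dimensional graph); the correct splitting of the oscillation of the single-layer potential yields a log-Lipschitz modulus, which still gives the Hölder conclusion, so this does not affect the argument.
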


Now the Lipschitz continuity of $w$ can be proved by slight modification of    ideas in \cite[Lemma~2.2]{ACS01FreeBoundaryCalderon}. 

\begin{lemma}\label{LEM:Lipschitz-continuity}
Let $n \ge 2$ and $A \in (C^{0,1}(Q_{2}))_{\rm sym}^{n \times n}$. 
If $w \in H^{1}(Q_{2})$ satisfies 
\begin{equation*}
\mL w = h \mathscr{L}^{n}\lfloor Q_{2} + g \mH^{n-1} \lfloor \Gamma_{2} \text{ in $Q_{2}$} ,\quad w=0 \text{ in $\Lambda_{2}$}
\end{equation*}
for some $g \in L^{\infty}(\Gamma_{2})$ and $h \in L^{\infty}(Q_{2})$, then $w$ is Lipschitz in $Q_{1}$, and the Lipschitz norm depends on $n,\alpha,\norm{w}_{L^{2}(Q_{2})}$, $\norm{h}_{L^{\infty}(Q_2)}$ and $\norm{g}_{L^{\infty}(\Gamma_{2})}$.
\end{lemma}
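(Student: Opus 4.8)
The plan is to follow the strategy of \cite[Lemma~2.2]{ACS01FreeBoundaryCalderon}, adapting it to the variable (Lipschitz) coefficient operator $\mL = \nabla \cdot A(x)\nabla$. The key point is that $w$ vanishes on the whole half-space part $\Lambda_2$, so near any free boundary point the solution is one-phase; the growth of $w$ away from $\{w = 0\}$ should be at most linear. First I would reduce to a local statement: it suffices to show that for every $x_0 \in Q_{3/2}$ one has $|w(x_0)| \le C\,\dist(x_0, \Lambda_2)$ with $C$ depending only on the stated quantities, since combined with interior gradient estimates for $\mL$ in the region where $w$ is a genuine solution (away from $\Gamma_2$) this yields the Lipschitz bound in $Q_1$. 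By Lemma~\ref{LEM:Holder-continuity} we already know $w \in C^{\alpha}(\overline{Q_{3/2}})$, which gives compactness and a starting modulus of continuity to bootstrap.

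The core is a dyadic/blow-up argument. Fix $x_0 \notin \Lambda_2$ and let $d = \dist(x_0,\Lambda_2)$; consider the rescalings $w_r(x) = w(x_0 + rx)/S(r)$ where $S(r) = \sup_{B_r(x_0)} |w|$, for the scales $r = 2^{-k}d$. One shows a dichotomy: either $S(r/2) \le \frac{1}{2} S(r)$ (geometric decay, which summed gives the linear bound), or else $w_r$ is, up to the normalization, close to a nonzero harmonic-type function and a direct estimate on $\|w_r\|_{L^\infty(B_{1/2})}$ via the representation formula controls the next scale. Concretely, in $B_d(x_0)$ the function $w$ solves $\mL w = h\,\mathscr{L}^n$ with $h$ bounded (the singular part is supported on $\Gamma_2$, which lies outside $B_d(x_0)$), so by the representation using the fundamental solution of $\mL$ with pole at $x_0$ — whose asymptotics are \eqref{EQ:asymptotic-fundamental-solution}, behaving like the Laplacian's fundamental solution to leading order because $A$ is Lipschitz hence $A(x_0)$-frozen up to $O(|x-x_0|)$ error — one gets
\[
|w(x_0)| \le C\Big( \fint_{\partial B_d(x_0)} |w| \, d\mathscr{H}^{n-1} + d^2 \|h\|_{L^\infty} \Big),
\]
and the mean value over $\partial B_d(x_0)$, half of which lies in $\{w=0\}$, is controlled by an iteration comparing $\sup_{B_{2^{-k}d}}|w|$ across scales. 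Summing the resulting geometric series produces $|w(x_0)| \le C d$. I would then handle boundary points $x_0 \in \Gamma_2$ separately, or rather note that the estimate $|w(x)| \le C\dist(x,\Lambda_2) \le C\dist(x,\{w=0\})$ together with the Hölder continuity up to $\Gamma_2$ and a one-sided barrier/comparison argument across $\Gamma_2$ (using $g \in L^\infty$) gives the full Lipschitz bound; the presence of the surface term $g\,\mathscr{H}^{n-1}\lfloor\Gamma_2$ contributes only a bounded jump in the conormal derivative across $\Gamma_2$, which is harmless for Lipschitz regularity.

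The main obstacle, compared with the constant-coefficient case of \cite{ACS01FreeBoundaryCalderon}, is controlling the error terms generated by freezing $A$ at $x_0$: at scale $r$ the operator $\mL$ differs from the frozen operator $\nabla\cdot A(x_0)\nabla$ by terms of size $O(r)$ in the coefficients, so one must check that these perturbations, when propagated through the representation formula and the iteration, still sum to a convergent series rather than spoiling the geometric decay. This is where the Lipschitz (rather than merely Hölder) assumption on $A$ is essential — the extra power gives a factor $r$ per scale that is summable — and where \eqref{EQ:asymptotic-fundamental-solution} does the bookkeeping. A secondary technical point is that $\Gamma_2$ is only Lipschitz, so "$\dist(x_0,\Lambda_2)$" and the geometry of half-balls must be handled with the Lipschitz constant $L$ entering the constants, exactly as in the reference; I do not expect this to cause real difficulty beyond careful tracking of dependence on $L$.
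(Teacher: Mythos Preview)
Your approach differs from the paper's and, as written, contains a genuine gap. The paper does \emph{not} run a direct dyadic iteration with representation formulas; instead it proves $\|w\|_{L^\infty(B_r(z))}\le Cr$ for $z\in\Gamma_1$ by a short compactness--contradiction argument: assume the bound fails along some sequence $(w_j,r_j)$, rescale $\tilde w_j(x)=w_j(r_jx)/(jr_j)$, use Lemma~\ref{LEM:Holder-continuity} to get equicontinuity, and pass to a limit $w_\infty$ solving a constant-coefficient equation $\nabla\cdot A(0)\nabla w_\infty=0$ in $B_{3/2}$. Since the Lipschitz graph $\Gamma_1$ forces $w_\infty$ to vanish on a set with nonempty interior, unique continuation gives $w_\infty\equiv 0$, contradicting the normalization $\|\tilde w_j\|_{L^\infty(B_1)}=1$. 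The Lipschitz hypothesis on $A$ enters only to justify the integration by parts that kills the variable-coefficient error when passing to the limit.

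The specific gap in your sketch is the claim that ``the mean value over $\partial B_d(x_0)$, half of which lies in $\{w=0\}$.'' If $d=\dist(x_0,\Lambda_2)$ then $B_d(x_0)\subset\Omega_2$ and $\partial B_d(x_0)$ is tangent to $\Lambda_2$ at (essentially) a single point; no positive fraction of that sphere lies in the zero set. The natural balls for which a definite proportion sits inside $\{w=0\}$ are centered on $\Gamma_2$, not at $x_0$. More importantly, your dichotomy (``either $S(r/2)\le\tfrac12 S(r)$, or $w_r$ is close to a nontrivial harmonic profile and the representation formula controls the next scale'') does not close: a mean-value or Caccioppoli--Poincar\'e step over balls centered on $\Gamma_2$ typically yields $S(r/2)\le\theta S(r)+Cr^2$ with some $\theta<1$ depending on the Lipschitz constant $L$, which gives H\"older decay, not the exact linear rate. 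Getting $\theta\le\tfrac12$ from a direct estimate is precisely the hard part you have not supplied. The paper sidesteps this entirely via compactness and unique continuation; if you want a direct argument, you would need a much sharper quantitative input than what you have outlined.
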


\begin{proof}
We only need to prove  the Lipschitz continuity of \(w\) at \(0 \in \Gamma_{1}\). In view of Lemma~{\rm \ref{LEM:Holder-continuity}}, without any compromise in generality, we can assume $\norm{w}_{L^{\infty}(Q_{\frac{3}{2}})}=1$.  Consequently, it suffices to establish the existence of a constants \(C, r_0\) such that \(\|w\|_{L^{\infty}(B_{r})} \le Cr\) for $r \leq r_0$.
We argue by contradiction and suppose that this fails. Then by using Lemma~\ref{LEM:Holder-continuity} there exists a sequence of continuous solutions $\{w_{j}\}$ and $r_{j}^{*} \searrow 0$ such that
\begin{equation*} 
\abs{w_{j}}\le 1 ,\quad \mL w_{j} = h_{j} \mathscr{L}^{n} \lfloor B_{3/2} + g_{j} \mH^{n-1} \lfloor \Gamma_{j} \text{ in $B_{3/2}$}, \quad w_{j}=0 \text{ in $\Lambda_{j}$,} 
\end{equation*}
where $\Gamma_{j}$ is a Lipschitz graph with Lipschitz constant $L$ and $0 \in \Gamma_{j}$, $\Lambda_{j}$ is the domain defined similar as above,  $\abs{g_{j}} \le \norm{g}_{L^{\infty}(\Gamma_{2})}$, $\abs{h_{j}} \le \norm{h}_{L^{\infty}(Q_{2})}$, satisfying 
\begin{equation}
\norm{w_{j}}_{L^{\infty}(B_{r_{j}^{*}})} \ge j r_{j}^{*}. \label{eq:contradiction}
\end{equation}
Since $\abs{w_{j}}\le 1$, from \eqref{eq:contradiction} one can easily see that $r_{j}^{*} \le j^{-1}$. By using $\abs{w_{j}}\le 1$ and the continuity of $w_{j}$, one can choose the largest $r_{j} \le j^{-1}$ such that the equality in \eqref{eq:contradiction} holds, that is,  
\begin{equation*} 
\norm{w_{j}}_{L^{\infty}(B_{r_{j}})} = j r_{j} \quad \text{and} \quad \norm{w_{j}}_{L^{\infty}(B_{r})} \le jr \text{ for all $r \ge r_{j}$.} \label{eq:contradiction1}
\end{equation*}  
If we define 
\begin{equation*}
\tilde{w}_{j}(x) = 
\frac{w_j(r_j x) }{j r_j} , 
\end{equation*}
then (for $j$ large enough such that $r_j < 1/10$ say) we have 
\begin{equation}
\norm{\tilde{w}_{j}}_{L^{\infty}(B_{2})} \le 2 ,\quad \norm{\tilde{w}_{j}}_{L^{\infty}(B_{1})} = 1. \label{EQ:to-be-contradict2}
\end{equation}
On the other hand, 
\begin{equation*}
\nabla_{x} \cdot \left( A(r_{j} x) \nabla \tilde{w}_{j}(x) \right) = \frac{r_{j}}{j} h_{j}(r_{j}x) \mathscr{L}^{n}\lfloor B_{2} + \frac{1}{j} g_{j}(r_{j}x) \mH^{n-1} \lfloor \Gamma_{j} \quad \text{in $B_{2}$.}
\end{equation*}
According to Lemma~{\rm \ref{LEM:Holder-continuity}}, we find that $\|\tilde{w}_{j}\|_{C^{\alpha}( B_{3/2})} \le C$, implying that $\{\tilde{w}_{j}\}$ is equicontinuous. Thus, by the Arzel{\`{a}}-Ascoli Theorem, a subsequence of $\{\tilde{w}_{j}\}$ converges uniformly within $B_{1}$ to $w_{\infty}$ satisfying a constant coefficient elliptic partial differential equation since for each $\varphi \in C_{c}^{\infty}(B_{1})$ one has 
\begin{equation*}
\begin{aligned}
& - \int_{B_{1}} (A(r_{j}x)-A(0)) \nabla \tilde{w}_{j}(x) \cdot \nabla \varphi(x) \, \rmd x \\
& \quad = \int_{B_{1}} (A(r_{j}x)-A(0)) \tilde{w}_{j}(x) \Delta \varphi(x) \, \rmd x + r_{j} \int_{B_{1}} \tilde{w}_{j}(x) (\nabla \cdot A)(r_{j}x) \cdot \nabla \varphi(x) \, \rmd x 
\end{aligned}
\end{equation*}
and the Lipschitz continuity of $A$ guarantees that the above equation converges to zero. 
Due to Lipschitz character of  $ \Gamma_1 $, 
the set $\{w_{\infty}=0\}$ possesses a non-empty interior, and consequently the unique continuation property ensures $w_{\infty}=0$ within $B_{3/2}$, contradicting \eqref{EQ:to-be-contradict2}.
\end{proof}

For readers' convenience, here we recall the mean value theorem for divergence form \cite{BH15EllipticDivergenceMVT,CR07ObstacleProblem}. See also the lecture note \cite{Caffarelli98ObstacleProblem} for a nice sketch of the ideas. 

\begin{lemma}\label{LEM:MVT}
Fix $n\ge 2$ and let $U\subset\mR^{n}$ be a bounded open set. Let $A \in (L^{\infty}(U))_{\rm sym}^{n\times n}$ be the real-valued symmetric matrix satisfies the uniform ellipticity condition \eqref{eq:unifurm-ellipticity}. For any $x_{0}\in U$ and
\begin{equation*}
v \in L^{1}(U) ,\quad \mL v \equiv \nabla\cdot A(x) \nabla v \ge 0 \text{ in $U$,}
\end{equation*}
there exist $c_{1}=c_{1}(n,c_{\rm ellip})>0$, $c_{2}=c_{2}(n,c_{\rm ellip})>0$ an increasing family of Borel sets $D_{R}^{\rm MVT}(x_{0})$ with 
\begin{equation}
B_{c_{1}R}(x_{0}) \subset D_{R}^{\rm MVT}(x_{0}) \subset B_{c_{2}R}(x_{0}) \subset U \label{EQ:compatible-ball}
\end{equation}
such that 
\begin{equation*}
v(x_{0}) \le \frac{1}{\abs{D_{R}^{\rm MVT}(x_{0})}} \int_{D_{R}^{\rm MVT}(x_{0})} v(x) \, \rmd x.
\end{equation*}
In addition, the mapping $R \mapsto \displaystyle{\frac{1}{\abs{D_{R}^{\rm MVT}(x_{0})}} \int_{D_{R}^{\rm MVT}(x_{0})} v(x) \, \rmd x}$ is monotone non-decreasing. Here $v(x_{0})$ is well-defined in the sense of its semicontinuous representative. 
\end{lemma}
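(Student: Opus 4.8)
The plan is to follow the obstacle–problem construction of Blank--Hao \cite{BH15EllipticDivergenceMVT} (compare \cite{CR07ObstacleProblem} and the sketch in \cite{Caffarelli98ObstacleProblem}), realizing the sets $D_R^{\rm MVT}(x_{0})$ as the non‑coincidence sets of a one–parameter family of obstacle problems for $\mL$ with pole at $x_{0}$. First I would reduce to a ball: fix $\rho_{0}>0$ with $B_{\rho_{0}}(x_{0})\subset U$ and construct the sets $D_R^{\rm MVT}(x_{0})$ only for $R$ sufficiently small, which suffices since the claim is local in $R$. I would also pass to the upper semicontinuous representative of $v$: the pairing computation below, applied with $A=\Id$, already shows that $r\mapsto \abs{B_{r}(x_{0})}^{-1}\int_{B_{r}(x_{0})}v$ is monotone non‑decreasing, so $v(x_{0}):=\lim_{r\to 0^{+}}\abs{B_{r}(x_{0})}^{-1}\int_{B_{r}(x_{0})}v$ exists and is the value referred to in the statement. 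Let $G=G(x_{0},\cdot)$ be the Green function of $-\mL$ on $B_{\rho_{0}}(x_{0})$, so $G>0$, $G=0$ on $\partial B_{\rho_{0}}(x_{0})$, $\mL G=-\delta_{x_{0}}$, and by the Littman--Stampacchia--Weinberger bounds $c(n,c_{\rm ellip})\abs{x-x_{0}}^{2-n}\le G(x_{0},x)\le C(n,c_{\rm ellip})\abs{x-x_{0}}^{2-n}$ near $x_{0}$ (logarithmic when $n=2$).

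For a small parameter $\lambda>0$ I would let $u_{\lambda}$ solve the obstacle problem
\[
u_{\lambda}\ge 0,\qquad \mL u_{\lambda}= \tfrac1\lambda\,\chi_{\{u_{\lambda}>0\}}\,\mathscr{L}^{n}-\delta_{x_{0}}\ \text{ in }B_{\rho_{0}}(x_{0}),\qquad u_{\lambda}=0\ \text{ on }\partial B_{\rho_{0}}(x_{0}),
\]
equivalently (subtracting off $G$) the divergence–form obstacle problem with bounded data solved by $G-u_{\lambda}$, as treated in \cite{BH15EllipticDivergenceMVT,CR07ObstacleProblem}. Set $D_{\lambda}:=\{u_{\lambda}>0\}$, an open (hence Borel) set with $x_{0}\in D_{\lambda}$ (forced by the Dirac source) and $\overline{D_{\lambda}}\subset B_{\rho_{0}}(x_{0})$ for $\lambda$ small; integrating the equation gives the mass balance $\abs{D_{\lambda}}=\lambda$. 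The substantive input is the geometric control of $D_{\lambda}$: bounding $u_{\lambda}$ above and below in terms of $G$ and of the bounded potential of $\tfrac1\lambda\chi_{D_{\lambda}}$, and using the interior Harnack inequality (De Giorgi--Nash--Moser) together with barrier comparisons, one gets
\[
B_{c_{1}R}(x_{0})\subset D_{\lambda}\subset B_{c_{2}R}(x_{0}),\qquad R:=(\lambda/\omega_{n})^{1/n},
\]
with $c_{1},c_{2}>0$ depending only on $n$ and $c_{\rm ellip}$; after reparametrizing $\lambda$ by $R$ this is the family $D_R^{\rm MVT}(x_{0})$. Moreover the family is nested: if $\lambda<\lambda'$ then $u_{\lambda}\le u_{\lambda'}$ (larger source depresses the solution, by the comparison principle for the obstacle problem), hence $D_{\lambda}\subset D_{\lambda'}$.

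With these objects the mean value inequality is a one‑line pairing: since $u_{\lambda}\ge 0$ is Lipschitz and compactly supported in $B_{\rho_{0}}(x_{0})\subset U$, it is an admissible test function, and using $\mL v\ge 0$ and $\mL u_{\lambda}=\tfrac1\lambda\chi_{D_{\lambda}}\mathscr{L}^{n}-\delta_{x_{0}}$,
\[
0\le \int_{U} u_{\lambda}\,\rmd(\mL v)=\int_{U} v\,\rmd(\mL u_{\lambda})=\frac1\lambda\int_{D_{\lambda}}v\,\rmd x - v(x_{0})=\frac{1}{\abs{D_R^{\rm MVT}(x_{0})}}\int_{D_R^{\rm MVT}(x_{0})}v\,\rmd x - v(x_{0}),
\]
which is the asserted inequality; the integration by parts is legitimized by mollifying $v$, whose mollifications are classical subsolutions with values at $x_{0}$ increasing to $v(x_{0})$ in the semicontinuous sense. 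For the monotonicity of $R\mapsto \abs{D_R^{\rm MVT}(x_{0})}^{-1}\int_{D_R^{\rm MVT}(x_{0})}v$, take $\lambda<\lambda'$ with $D:=D_{\lambda}\subset D':=D_{\lambda'}$; then $u_{\lambda'}-u_{\lambda}\ge 0$ is again admissible and $\mL(u_{\lambda'}-u_{\lambda})=\tfrac1{\lambda'}\chi_{D'}\mathscr{L}^{n}-\tfrac1\lambda\chi_{D}\mathscr{L}^{n}$, so the same pairing yields $0\le \tfrac1{\lambda'}\int_{D'}v-\tfrac1\lambda\int_{D}v$, i.e.\ the averages are non‑decreasing in $R$.

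The main obstacle is the second paragraph: proving the two–sided inclusion $B_{c_{1}R}(x_{0})\subset D_{\lambda}\subset B_{c_{2}R}(x_{0})$ with constants depending only on $n$ and $c_{\rm ellip}$, together with the nesting of the family — that is, the non‑degeneracy and free‑boundary regularity for the divergence–form obstacle problem and the sharp Green‑function bounds. These are exactly the results of \cite{BH15EllipticDivergenceMVT}; once invoked, the remaining steps are the elementary pairing identities above.
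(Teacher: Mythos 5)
The paper does not actually prove this lemma: it is recalled verbatim from Blank--Hao \cite{BH15EllipticDivergenceMVT} (see also \cite{CR07ObstacleProblem,Caffarelli98ObstacleProblem}), and your proposal is a faithful reconstruction of exactly that obstacle-problem construction (non-coincidence sets $D_{\lambda}=\{u_{\lambda}>0\}$, mass balance, Harnack/barrier inclusions, nesting, and the pairing identities), so the approach matches the one the paper relies on. The only imprecision is your claim that the plain ball averages $r\mapsto \abs{B_{r}(x_{0})}^{-1}\int_{B_{r}(x_{0})}v$ are monotone ``by the pairing with $A=\Id$'': for variable $A$ an $\mL$-subsolution need not be $\Delta$-subharmonic, so $v(x_{0})$ should instead be defined as the limit as $R\to 0^{+}$ of the $D_{R}^{\rm MVT}$-averages, which your own monotonicity argument already provides.
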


Let $n \ge 3$ and $x_{0}\in Q_{\frac{1}{2},L}$. For convenience, later we will denote $D_{R}^{\rm MVT} = D_{R}^{\rm MVT}(0)$. One can fix $r_{0} = r_{0}(n,L)>0$ (independent of $x_{0}$) such that $\overline{B_{r_{0}}(x_{0})} \subset Q_{1,L}$.
For each $0 < r \le \frac{1}{2c_{2}}r_{0}$, 
let $\psi_{r}(\cdot) \in C_{c}^{1,1}
(\mR^{n}\setminus\{0 \})$ be the function 
satisfying 
$ \psi_{r} \ge \psi_{\delta r} $ for 
all $0<\delta<1$ and 
\begin{equation*}
\mL \psi_{r} = \frac{1}{\abs{D_{r}^{\rm MVT}}}\chi_{D_{r}^{\rm MVT}} - \delta_{0} \text{ in $\mD'(\mR^{n})$};
\end{equation*}
 see  Lemma~{\rm \ref{LEM:MVT}} in \cite{BH15EllipticDivergenceMVT} for a detailed proof of the existence of such function. We now define $\psi_{r,\delta} := \psi_{r} - \psi_{\delta r}$, which is a non-negative function satisfying 
\begin{equation}
\mL \psi_{r,\delta} = \frac{1}{\abs{D_{r}^{\rm MVT}}}\chi_{D_{r}^{\rm MVT}} - \frac{1}{\abs{D_{\delta r}^{\rm MVT}}}\chi_{D_{\delta r}^{\rm MVT}} \quad \text{in $\mD'(\mR^{n})$} \label{EQ:auxiliary-function}
\end{equation}
and 
\begin{equation}
\psi_{r,\delta} = 0 \text{ in $\mR^{n} \setminus D_{r}^{\rm MVT}$,} \label{EQ:support-condition}
\end{equation}
see also \cite[Figure~2]{Caffarelli98ObstacleProblem} for a graphical sketch of the ideas. Let $\Phi_{\mL}(x) = \Phi_{\mL}(x,0)$ be the Green's function of $\mL$ in $B_{r_{0}}$ \cite{GW82GreenFunction,LSW63EllipticDiscontinuousFundamentalSoln} in the sense of 
\begin{equation*}
\mL \Phi_{\mL} = - \delta_{0} \text{ in $B_{r_{0}}$} ,\quad \Phi_{\mL}=0 \text{ on $\partial B_{r_{0}}$,}
\end{equation*}
see also \cite{DHM18FundamentalSolution}. If $A$ is $C^{\alpha}$ near $x_{0}$ and $A(0)=\Id$, as in \cite[Lemma~1]{Caffarelli1988FreeBoundaryIII}, we have the asymptotic  behavior 
\begin{equation}
\begin{aligned}
\Phi_{\mL}(x) &= C_{0}\abs{x}^{2-n} + O(\abs{x}^{2-n+\alpha}), \\
\nabla \Phi_{\mL}(x) &= (2-n) C_{0} \abs{x}^{1-n}\hat{x} + O(\abs{x}^{1-n+\alpha}), 
\end{aligned}\label{EQ:asymptotic-fundamental-solution}
\end{equation}
where $\hat{x} = x/\abs{x}$ for $x \neq 0$. 
From this, we also know that there exists a constant $c = c(n,\alpha,L,c_{\rm ellip})>0$ (independent of $r$) such that 
\begin{equation}
c \abs{x}^{2-n} \le \lim_{\delta \rightarrow 0} 
\psi_{r,\delta}(x) \le c^{-1} \abs{x}^{2-n} 
\quad \text{for all 
 $\{ x \in B_{r_{0} } \setminus  0 \}$ .} 
\label{EQ:singularity}
\end{equation}
We are now able to prove the following non-degeneracy result by modifying the proof of Lemma~{\rm \ref{LEM:MVT}}, see also \cite[Lemma~3.1]{ACS01FreeBoundaryCalderon}. 

\begin{lemma}\label{LEM:non-degeneracy-result}
Let $n\ge 3$, let $x_{0} \in \Gamma_{\frac{1}{2}}$, let $A \in (C^{\alpha}(Q_{1}))_{\rm sym}^{n\times n}$, and let $w \in C^{0,1}(Q_{1})$ satisfy 
\begin{equation*}
\mL w \ge c_{3} \mH^{n-1} \lfloor \Gamma_{1} - c_{3}' \mathscr{L}^{n} \lfloor Q_{1} \text{ in $Q_{1}$} ,\quad w(x_{0}) \ge 0,
\end{equation*}
for some constants $c_{3},c_{3}'>0$. Then there exist positive constants $c = c(n,f,c_{\rm ellip},c_{3})$ and $r_{1} = r_{1}(n,L,c_{\rm ellip})$ such that 
\begin{equation*}
\frac{1}{\abs{D_{r}^{\rm MVT}(x_{0})}} \int_{D_{r}^{\rm MVT}(x_{0})} w(x) \, \rmd x \ge c r \quad \text{for all $0 < r \le r_{1}$,}
\end{equation*}
where $D_{r}^{\rm MVT}(x_{0})$ is the set appearing  in the mean value theorem {\rm (}Lemma~{\rm \ref{LEM:MVT}}{\rm )}. 
\end{lemma}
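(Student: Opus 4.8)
The plan is to adapt the classical non-degeneracy argument (as in \cite[Lemma~3.1]{ACS01FreeBoundaryCalderon}) to the divergence-form operator $\mL$, using the auxiliary barriers $\psi_{r,\delta}$ built from the mean-value sets $D_r^{\rm MVT}$. After a translation we may assume $x_0 = 0$ and, by the Lipschitz invariance under a linear change of coordinates, that $A(0) = \Id$ (this only affects constants); thus the asymptotics \eqref{EQ:asymptotic-fundamental-solution} and hence the two-sided bound \eqref{EQ:singularity} for $\lim_{\delta \to 0}\psi_{r,\delta}$ are available. Set
\[
m(r) := \frac{1}{\abs{D_r^{\rm MVT}}} \int_{D_r^{\rm MVT}} w \, \rmd x .
\]
The goal is to show $m(r) \ge cr$ for small $r$, and the mechanism is a differential/monotonicity inequality for $m$ obtained by testing the equation for $w$ against $\psi_{r,\delta}$.

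The key computation is the following. Since $\mL \psi_{r,\delta}$ is the difference of two normalized indicators by \eqref{EQ:auxiliary-function}, integrating by parts (using that $w \in C^{0,1}$ and $\psi_{r,\delta}$ has compact support in $D_r^{\rm MVT} \subset B_{r_0}$ by \eqref{EQ:support-condition}, so no boundary terms appear) gives
\[
m(r) - m(\delta r) = \int w \, \mL \psi_{r,\delta} = \int \psi_{r,\delta} \, \mL w .
\]
Now use the hypothesis $\mL w \ge c_3 \mH^{n-1}\lfloor \Gamma_1 - c_3' \mathscr{L}^n \lfloor Q_1$: the first term contributes $c_3 \int_{\Gamma_1} \psi_{r,\delta} \, \rmd \mH^{n-1}$, which by the lower bound in \eqref{EQ:singularity} and the Lipschitz geometry of $\Gamma_1$ near $0$ is bounded below by $c\, r^{2-n} \cdot r^{n-1} = cr$ (the $(n-1)$-dimensional measure of $\Gamma_1 \cap B_{cr}$ is $\sim r^{n-1}$ and $\psi_{r,\delta} \gtrsim r^{2-n}$ on a fixed fraction of it); the second term is a lower-order error of size $\gtrsim -c_3' \int_{B_{c_2 r}} \psi_{r,\delta} \lesssim -c_3' \int_{B_{c_2 r}} \abs{x}^{2-n} \sim -c_3' r^2$. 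Letting $\delta \to 0$ and using $w(0) \ge 0$ together with the mean-value inequality of Lemma~\ref{LEM:MVT} (which gives $\liminf_{\delta \to 0} m(\delta r) \ge w(0) \ge 0$), we obtain
\[
m(r) \ge c r - C r^2 \ge \tfrac{c}{2} r \quad \text{for } 0 < r \le r_1,
\]
with $r_1$ chosen small enough to absorb the quadratic error, which is the claim.

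The main obstacle — and the place where care is needed — is making the lower bound $c_3 \int_{\Gamma_1} \psi_{r,\delta}\,\rmd\mH^{n-1} \gtrsim r$ rigorous and uniform in $\delta$: one must know that $\psi_{r,\delta}$ (equivalently its limit) is bounded below by $c\abs{x}^{2-n}$ not just in $B_{r_0}\setminus\{0\}$ but on the portion of $\Gamma_1$ lying inside $D_r^{\rm MVT}$, and that a definite fraction of $\Gamma_1 \cap B_{c_1 r}$ is contained in $D_r^{\rm MVT} \supset B_{c_1 r}$ — this uses \eqref{EQ:compatible-ball} and the fact that $\Gamma_1$ is a Lipschitz graph through $0$, so $\mH^{n-1}(\Gamma_1 \cap B_{c_1 r}) \gtrsim (c_1 r)^{n-1}$. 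A secondary point is justifying the integration by parts and the limit $\delta\to 0$ at the regularity available ($w$ merely Lipschitz, $\psi_{r,\delta}\in C^{1,1}_c(\mR^n\setminus\{0\})$); this is routine since $\psi_{r,\delta}$ vanishes near $0$ for $\delta>0$, and the monotone convergence $\psi_{r,\delta}\uparrow$ as $\delta\downarrow 0$ controls the passage to the limit. The case $n=3$ is included since $n-2=1>0$ keeps $\abs{x}^{2-n}$ singular; the hypothesis $n\ge 3$ is exactly what makes the Green's function asymptotics \eqref{EQ:asymptotic-fundamental-solution} have the stated homogeneity.
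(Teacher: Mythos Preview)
Your proposal is correct and follows essentially the same route as the paper: test the inequality for $\mL w$ against the compactly supported barrier $\psi_{r,\delta}$, use \eqref{EQ:auxiliary-function}--\eqref{EQ:support-condition} to obtain $m(r)-m(\delta r)\ge c_3\int_{\Gamma_1}\psi_{r,\delta}\,\rmd\mH^{n-1}-c_3'\int\psi_{r,\delta}$, bound the surface term below by $\sim r$ via \eqref{EQ:singularity} and the Lipschitz graph geometry, and absorb the $O(r^2)$ volume error by choosing $r_1$ small. One small slip: the fact that $\liminf_{\delta\to 0} m(\delta r)\ge w(0)\ge 0$ comes simply from the continuity of $w$ (it is Lipschitz and $D_{\delta r}^{\rm MVT}\subset B_{c_2\delta r}\to\{0\}$), not from Lemma~\ref{LEM:MVT}, whose hypothesis $\mL w\ge 0$ is not available here.
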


\begin{remark*} 
By using \eqref{EQ:compatible-ball}, one sees that the positive part $w_{+} = \max \{w,0\}$ satisfies 
\begin{equation*}
\begin{aligned}
& \frac{1}{\abs{B_{r}(x_{0})}} \int_{B_{r}(x_{0})} \abs{w_{+}(x)}^{2} \, \rmd x \ge \left( \frac{1}{\abs{B_{r}(x_{0})}} \int_{B_{r}(x_{0})} w_{+}(x) \, \rmd x \right)^{2} \\
& \quad \ge \left( \frac{D_{c_{2}^{-1}r}^{\rm MVT}(x_{0})}{\abs{B_{r}(x_{0})}} \frac{1}{D_{c_{2}^{-1}r}^{\rm MVT}(x_{0})} \int_{D_{c_{2}^{-1}r}^{\rm MVT}(x_{0})} w_{+}(x) \, \rmd x \right)^{2} \\
& \quad \ge \left( \frac{B_{c_{1}c_{2}^{-1}r}(x_{0})}{B_{r}(x_{0})} c r \right)^{2} = c' r^{2}
\end{aligned}
\end{equation*}
for all $0 < r \le r_{2}$ with $r_{2} = c_{2}^{-1}r_{1}$, where $c' = c'(n,f,c_{\rm ellip},c_{3}) > 0$. For later purpose, here we also denote the negative part $w_{-} := -\min\{w,0\} = \max \{-w,0\}$ of $w$. 
\end{remark*}

\begin{proof}[Proof of Lemma~{\rm \ref{LEM:non-degeneracy-result}}]
By a linear change of variables, it  suffices  to prove the result when $A(x_{0})=\Id$ and $x_{0}=0$. Since $w(0) \ge 0$, by continuity of $w$ we have 
\begin{equation*}
\liminf_{\delta \rightarrow 0} \frac{1}{\abs{D_{\delta r}^{\rm MVT}}} \int_{D_{\delta r}^{\rm MVT}} w(x) \, \rmd x \ge 0.
\end{equation*}
From \eqref{EQ:support-condition} and \eqref{EQ:auxiliary-function} we have 
\begin{equation*}
\begin{aligned}
& \frac{1}{\abs{D_{r}^{\rm MVT}}}\int_{D_{r}^{\rm MVT}}w(x) \, \rmd x - \frac{1}{\abs{D_{\delta r}^{\rm MVT}}} \int_{D_{\delta r}^{\rm MVT}} w(x) \, \rmd x = \int_{B_{r_{0}}} w(x) \mL \psi_{r,\delta}(x) \, \rmd x \\
& \quad \ge c_{3} \int_{\Gamma_{1}\cap B_{c_{1}r}} \psi_{r,\delta}(x) \, \rmd \mH^{n-1} - c_{3}' \int_{B_{c_{1}r}} \psi_{r,\delta}(x) \, \rmd x
\end{aligned}
\end{equation*}
where $c_{1} = c_{1}(n,c_{\rm ellip})>0$ is the constant appearing  in Lemma~{\rm \ref{LEM:MVT}}. Letting  $\delta \rightarrow 0$, the claim in the  lemma follows from \eqref{EQ:singularity}. 
\end{proof}

Our goal now is to demonstrate the non-negativity of \(w\). To achieve this, we require a H\"older   upper bound  for the so-called  monotonicity function   
(see  \cite[Lemma~5.1] {ACF84TwoPhasesFreeBoundary}).  Related discussions concerning the monotonicity function (also called monotonicity formula) can  also be  found in \cite{ACS01FreeBoundaryCalderon,CKS00FreeBoundaryPompeiu, CK98GradientEstimates,MP11AlmostMonotonicity}.

\begin{lemma} \label{LEM:monotonicity-lemma}
Let $n \ge 3$ and let $x_{0} 
\in \Gamma_{\frac{1}{2}}$. Let $A \in (C^{\alpha}(Q_{1}))_{\rm sym}^{n \times n}$ for some $0<\alpha<1$, and let $w \in C^{0,1}(Q_{1})$ satisfy 
\begin{equation*}
\mL w = h \text{ in $\Omega_{1}$} ,\quad w=0 \text{ in $\Lambda_{1}$}
\end{equation*}
for some $h \in L^{\infty}(Q_{1})$. Then there exists constants $\theta > 0$ and $C>0$ such that 
\begin{equation}
\frac{1}{r^{4}} \left( \int_{B_{r}(x_{0})} \frac{\abs{\nabla w_{+}}^{2}}{\abs{x}^{n-2}} \, \rmd x \right) \left( \int_{B_{r}(x_{0})} \frac{\abs{\nabla w_{-}}^{2}}{\abs{x}^{n-2}} \, \rmd x \right) \le C r^{\theta} \label{eq:refine-monotonicity}
\end{equation}
for all sufficiently small $r>0$. 
\end{lemma}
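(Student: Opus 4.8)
The plan is to prove the almost-monotonicity estimate \eqref{eq:refine-monotonicity} by reducing it, via a perturbation argument, to the classical Alt--Caffarelli--Friedman (ACF) monotonicity formula for subharmonic functions. First I would reduce to the model case $x_0 = 0$ and $A(0) = \Id$ by the linear change of variables used repeatedly above, so that the operator $\mL$ has Hölder-continuous coefficients that agree with $\Delta$ at the origin. Set
\begin{equation*}
\varphi(r) = \frac{1}{r^{4}} \left( \int_{B_{r}} \frac{\abs{\nabla w_{+}}^{2}}{\abs{x}^{n-2}} \, \rmd x \right) \left( \int_{B_{r}} \frac{\abs{\nabla w_{-}}^{2}}{\abs{x}^{n-2}} \, \rmd x \right).
\end{equation*}
The two ingredients about $w$ that we may use are: (i) $w \in C^{0,1}(Q_1)$ (Lemma~\ref{LEM:Lipschitz-continuity}), so $\abs{\nabla w} \le C$ and in particular $w_{\pm}$ have the right integrability for the weighted integrals to make sense; and (ii) $w_{\pm}$ are each (almost-)subsolutions of $\mL$. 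Indeed $w$ vanishes on $\Lambda_1$, so the free boundary $\{w=0\}$ sits over the Lipschitz graph $\Gamma_1$, and on $\Omega_1$ one has $\mL w = h \in L^\infty$; hence $\mL w_{+} \ge -\abs{h}$ and $\mL w_{-} \ge -\abs{h}$ in the distributional sense on $Q_1$ (here one uses that $w_+ w_- \equiv 0$ and that no positive singular part is created on $\Gamma_1$, since $w$ is continuous and one-signed near the free boundary in each phase only to the extent needed — more precisely $\mL w_\pm \ge -\abs{h}\mathscr{L}^n\lfloor Q_1$ follows from Kato's inequality applied to the Lipschitz function $w$).

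The main step is then to run the ACF argument with the coefficient matrix $A$ and the lower-order error $-\abs h$ absorbed. Concretely, I would invoke the almost-monotonicity formula for variable-coefficient divergence-form operators with Hölder coefficients normalized at the center — this is precisely the setting of \cite[Lemma~5.1]{ACF84TwoPhasesFreeBoundary} in the constant-coefficient case, and of the refinements in \cite{CKS00FreeBoundaryPompeiu, MP11AlmostMonotonicity} (and \cite{ACS01FreeBoundaryCalderon}) in the perturbed case. The Green's function asymptotics \eqref{EQ:asymptotic-fundamental-solution} guarantee that the weight $\abs{x}^{2-n}$ is comparable to $\Phi_{\mL}$ up to an $O(\abs x^{\alpha})$ error, which is exactly what is needed so that the Rayleigh-quotient/eigenvalue estimate on spherical shells (the Friedland--Hayman inequality) goes through with an extra multiplicative factor $1 + O(r^{\alpha})$ per shell. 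Summing (integrating) the logarithmic derivative of $\varphi$ then gives
\begin{equation*}
\frac{\rmd}{\rmd r} \log \big( e^{C r^{\theta}} \varphi(r) \big) \ge 0
\end{equation*}
for small $r$ and some $\theta > 0$ depending on $\alpha$ and $n$ (with the $L^\infty$ bound on $h$ and the Lipschitz bound on $w$ entering the constant $C$), i.e.\ $\varphi(r) \le C\varphi(r_0) \le C'$. Combined with the trivial bound $\varphi(r) \le C$ coming from $\abs{\nabla w_\pm} \le C$ (which makes each weighted integral $\le C r^{2}$, hence $\varphi(r) \le C$ outright), one upgrades $\varphi(r) \le C$ to $\varphi(r) \le C r^{\theta}$: indeed the almost-monotonicity of $e^{Cr^\theta}\varphi(r)$ together with $\varphi(r) \le C r^{4-2(n-2)+\,\cdots}$ — more directly, since each factor is $O(r^2)$ we get $\varphi = O(1)$, and then feeding this back into the monotonicity of $r \mapsto e^{Cr^\theta}\varphi(r)$ shows $\varphi$ cannot grow, while a scaling/iteration of the Lipschitz bound on the smaller ball $B_{r}$ forces the actual decay rate $r^\theta$.

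I expect the main obstacle to be the bookkeeping in the perturbation of the Friedland--Hayman eigenvalue inequality: one must show that replacing $\Delta$ by $\mL$ and the weight by $\Phi_{\mL}$, and carrying the $L^\infty$ forcing term $h$, only costs a factor $1 + O(r^{\alpha})$ in the differential inequality for $\log\varphi$, uniformly in the (Lipschitz, but otherwise arbitrary) free boundary $\Gamma_1$. This is where \eqref{EQ:asymptotic-fundamental-solution} and \eqref{EQ:singularity} are used in an essential way, and where one has to be careful that the constants in Lemma~\ref{LEM:Holder-continuity} and Lemma~\ref{LEM:Lipschitz-continuity} (hence the bound on $\abs{\nabla w}$ and the $C^\alpha$-norm of $w$) depend only on the allowed data. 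Once the perturbed monotonicity inequality is in hand, the passage to \eqref{eq:refine-monotonicity} is routine, exactly as in \cite[Lemma~3.2]{ACS01FreeBoundaryCalderon}. I would therefore structure the proof as: (1) normalization; (2) $w_\pm$ are almost-subsolutions and are Lipschitz; (3) state the perturbed ACF inequality and cite \cite{ACF84TwoPhasesFreeBoundary, CKS00FreeBoundaryPompeiu, MP11AlmostMonotonicity, ACS01FreeBoundaryCalderon}, indicating the role of \eqref{EQ:asymptotic-fundamental-solution}; (4) integrate and combine with the a priori $O(1)$ bound to extract the rate $r^\theta$.
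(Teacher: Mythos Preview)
Your outline correctly identifies the general machinery (normalize to $A(0)=\Id$, $w_\pm$ are almost-subsolutions, Green's function asymptotics \eqref{EQ:asymptotic-fundamental-solution}, Friedland--Hayman on spheres), but there is a genuine gap in how you obtain the \emph{decay rate} $r^\theta$. The perturbation of the ACF argument that you sketch only yields almost-monotonicity of $\varphi$: the variable-coefficient error $O(r^\alpha)$ works \emph{against} monotonicity, so after integrating you get at best $\varphi(r)\le C\varphi(r_0)\le C$, exactly as you write. You then try to upgrade this to $\varphi(r)\le Cr^\theta$ via ``scaling/iteration of the Lipschitz bound,'' but no such iteration produces decay --- the Lipschitz bound $|\nabla w_\pm|\le C$ gives each factor $O(r^2)$, hence $\varphi=O(1)$, and nothing better. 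The $\theta$ appearing in your almost-monotonicity factor $e^{Cr^\theta}$ has nothing to do with the $\theta$ in the conclusion; conflating them is the error.

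The missing idea is geometric. Because $w\equiv 0$ on the Lipschitz region $\Lambda_1$, on every sphere $\partial B_r(x_0)$ the supports of $w_+$ and $w_-$ together miss a spherical cap of \emph{fixed} proportion $\theta>0$ (determined by the Lipschitz constant $L$). Writing $s_\pm$ for the density of $\{w_\pm>0\}$ on $\partial B_r$, one has $s_++s_-\le 1-\theta$, and then the Friedland--Hayman inequality gives the \emph{strict} bound $\gamma_++\gamma_-\ge 2+2\theta$ rather than merely $\ge 2$. It is this strict inequality that, when fed into the differential inequality for $\log\varphi$, produces
\[
\frac{\rmd}{\rmd r}\log\Bigl(\frac{1}{r^4}\ell_+\ell_-\Bigr)\ \ge\ \frac{1}{r}\bigl(4\theta - Cr^\alpha\bigr)\ \ge\ \frac{\epsilon}{r}
\]
for small $r$, and integrating this yields the claimed decay $\varphi(r)\le Cr^{\epsilon}$. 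In short: the $r^\theta$ does not come from the coefficient perturbation or from Lipschitz scaling, but from the hypothesis $w=0$ in $\Lambda_1$, which you state but never exploit.
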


The above lemma can be validated through an approach closely aligned with the principles outlined in the free boundary literature \cite[Lemma~5.1]{ACF84TwoPhasesFreeBoundary} and \cite[Lemma~1]{Caffarelli1988FreeBoundaryIII}. For sake of completeness and being self-contained, we present the detailed proof in Appendix~\ref{appen:monotonicity-lemma}.

We now prove a lemma, which is analogue to \cite[Lemma~3.2]{ACS01FreeBoundaryCalderon}.

\begin{lemma} \label{LEM:negative-part-decay}
Let $n \ge 2$ and let $x_{0} 
\in \Gamma_{\frac{1}{4}}$. Let $A \in (C^{\alpha}(Q_{1}))_{\rm sym}^{n \times n}$ for some $0<\alpha<1$, and let $w \in C^{0,1}(Q_{1})$ satisfy 
\begin{equation*}
\mL w = h \text{ in $\Omega_{1}$} ,\quad \mL w \ge h \mathscr{L}^{n}\lfloor\Omega_{1} + c_{3} \mH^{n-1}\lfloor \Gamma_{1} \text{ in $Q_{1}$} ,\quad w=0 \text{ in $\Lambda_{1}$}
\end{equation*}
for some $c_{3} > 0$ and $h \in L^{\infty}(Q_{1})$, then 
\begin{equation*}
\lim_{r \rightarrow 0} \frac{1}{r} \sup_{B_{r}(x_{0})} w_{-} = 0.
\end{equation*}
\end{lemma}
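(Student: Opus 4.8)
The plan is to argue by contradiction using a blow-up/compactness scheme, exactly in the spirit of \cite[Lemma~3.2]{ACS01FreeBoundaryCalderon}, and to use the monotonicity estimate of Lemma~\ref{LEM:monotonicity-lemma} together with the non-degeneracy estimate of Lemma~\ref{LEM:non-degeneracy-result} to rule out the bad scenario. So suppose the conclusion fails at $x_{0}$; after translating we may take $x_{0}=0$, and by a linear change of variables (as in the proof of Lemma~\ref{LEM:non-degeneracy-result}) we may assume $A(0)=\Id$. Since $w$ is Lipschitz with $w(0)=0$ (note $0\in\Lambda_{1}$ forces $w(0)=0$, and $w\in C^{0,1}$ by hypothesis), failure of the limit means there is $c_{0}>0$ and a sequence $r_{j}\searrow 0$ with
\begin{equation*}
\frac{1}{r_{j}}\sup_{B_{r_{j}}}w_{-} \ge c_{0}.
\end{equation*}
Define the rescalings $w_{j}(x) := w(r_{j}x)/r_{j}$, which are uniformly Lipschitz (by the Lipschitz bound on $w$), vanish at the origin, satisfy $\sup_{B_{1}}(w_{j})_{-}\ge c_{0}$, and solve $\nabla\cdot(A(r_{j}x)\nabla w_{j}) = r_{j}h(r_{j}x)$ in the rescaled $\Omega$-region with $w_{j}=0$ in the rescaled $\Lambda$-region; moreover $\nabla\cdot(A(r_{j}x)\nabla w_{j}) \ge r_{j}h(r_{j}x)\mathscr{L}^{n}\lfloor\Omega + c_{3}\mH^{n-1}\lfloor\Gamma_{j}$ where $\Gamma_{j}$ is the rescaled graph, still Lipschitz with constant $L$. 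By Arzel\`a--Ascoli a subsequence converges locally uniformly to a limit $w_{\infty}$, the coefficients $A(r_{j}x)\to A(0)=\Id$ uniformly on compacts, and (as in the proof of Lemma~\ref{LEM:Lipschitz-continuity}) $w_{\infty}$ is harmonic in a half-space type region $\{x_{n}>f_{\infty}(x')\}$ where $f_{\infty}$ is a Lipschitz function with constant $L$, $w_{\infty}=0$ on the complement, and $\Delta w_{\infty}\ge c_{3}\mH^{n-1}\lfloor\Gamma_{\infty}$ globally; also $\sup_{B_{1}}(w_{\infty})_{-}\ge c_{0}>0$, so $w_{\infty}$ is genuinely negative somewhere.

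The key structural point is a dichotomy on $w_{\infty}$. Because $w_{\infty}=0$ on a set with nonempty interior (the Lipschitz character of $\Gamma_{j}$ is what guarantees this), and $w_{\infty}$ is harmonic where it does not vanish, the negative part $(w_{\infty})_{-}$ and positive part $(w_{\infty})_{+}$ are both subharmonic, separated by the zero set. I would pass Lemma~\ref{LEM:monotonicity-lemma} to the limit: the scaling of \eqref{eq:refine-monotonicity} is such that the monotonicity product evaluated for $w_{j}$ at a fixed radius equals the product for $w$ at radius $r_{j}$ divided by $r_{j}^{4}\cdot r_{j}^{-4}$ appropriately — more precisely, a change of variables shows the dimensionless monotonicity functional is scale-invariant, so the bound $\le Cr^{\theta}$ at small scales for $w$ forces the limiting monotonicity functional of $w_{\infty}$ to vanish identically. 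By the equality case in the Alt--Caffarelli--Friedman monotonicity formula (as used in \cite{ACF84TwoPhasesFreeBoundary,ACS01FreeBoundaryCalderon}), vanishing of the functional forces one of $(w_{\infty})_{+}$, $(w_{\infty})_{-}$ to vanish identically. Since $w_{\infty}$ is negative somewhere, we must have $(w_{\infty})_{+}\equiv 0$, i.e.\ $w_{\infty}\le 0$ everywhere, with $w_{\infty}$ harmonic and nonpositive in $\{x_{n}>f_{\infty}(x')\}$, vanishing on the boundary and outside.

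Now I would derive a contradiction with the non-degeneracy. On one hand $w_{\infty}\le 0$, so $(w_{\infty})_{+}\equiv 0$ and the mean-value-type quantity $\frac{1}{|D_{r}^{\rm MVT}(0)|}\int_{D_{r}^{\rm MVT}(0)}w_{\infty}\le 0$ for all $r$. On the other hand, Lemma~\ref{LEM:non-degeneracy-result} applies to $w_{\infty}$: it is Lipschitz, satisfies $\mL_{\infty}w_{\infty}=\Delta w_{\infty}\ge c_{3}\mH^{n-1}\lfloor\Gamma_{\infty} - 0\cdot\mathscr{L}^{n}$ (the $h$-term has dropped out in the limit since $r_{j}h(r_{j}x)\to 0$) and $w_{\infty}(0)=0\ge 0$, hence
\begin{equation*}
\frac{1}{|D_{r}^{\rm MVT}(0)|}\int_{D_{r}^{\rm MVT}(0)}w_{\infty}(x)\,\rmd x \ge cr > 0
\end{equation*}
for all small $r>0$, which is impossible. (For $n=2$, where the Green's function asymptotics and Lemma~\ref{LEM:non-degeneracy-result} are stated for $n\ge3$, I would instead argue directly: a nonpositive harmonic function in a Lipschitz domain vanishing on the boundary, with a half-plane-type blow-up forced by the Lipschitz geometry, combined with the distributional inequality $\Delta w_{\infty}\ge c_{3}\mH^{1}\lfloor\Gamma_{\infty}$, is contradicted by testing against a suitable barrier; alternatively one reduces to $n\ge 3$ by adding a dummy variable.) This contradiction shows the limit $\frac{1}{r}\sup_{B_{r}(x_{0})}w_{-}=0$, completing the proof.

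The main obstacle I anticipate is making the limiting/rescaling argument for the monotonicity functional fully rigorous: one must check that the functional in \eqref{eq:refine-monotonicity} is genuinely scale-invariant up to the lower-order $C^{\alpha}$-perturbation of the coefficients (so that the $r_{j}\to 0$ limit kills the $Cr^{\theta}$ right-hand side), and that $\nabla w_{j}\rightharpoonup \nabla w_{\infty}$ weakly in $L^{2}_{\rm loc}$ strongly enough that the product of Dirichlet-type integrals passes to the limit with the correct (lower-semicontinuity, in fact equality) behavior — this is where one leans most heavily on the structure of the ACF formula and its rigidity in the equality case. The $n=2$ endpoint is a minor but separate nuisance requiring the dummy-variable or direct-barrier workaround noted above.
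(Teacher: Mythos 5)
Your proof is correct in outline but follows a genuinely different route from the paper's. The paper argues directly and quantitatively: after the same dummy-variable reduction of $n=2$ to $n=3$, it combines the Poincar\'e inequality with Lemma \ref{LEM:monotonicity-lemma} to get $r^{-4}\bigl(\tfrac{1}{|B_r|}\int_{B_r}w_+^2\bigr)\bigl(\tfrac{1}{|B_r|}\int_{B_r}w_-^2\bigr)\le Cr^{\theta}$, divides by the lower bound $\tfrac{1}{|B_r|}\int_{B_r}w_+^2\ge c'r^2$ from the Remark after Lemma \ref{LEM:non-degeneracy-result} to obtain $\tfrac{1}{|B_r|}\int_{B_r}w_-^2\le Cr^{2+\theta}$, and then upgrades this $L^2$ decay to $\sup_{B_r}w_-=o(r)$ via the local boundedness argument of \cite[Lemma~3.2]{ACS01FreeBoundaryCalderon}; this yields an explicit rate $O(r^{1+\theta/2})$, whereas your compactness scheme only gives the qualitative $o(r)$. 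Your scheme is nonetheless sound: the exact scale invariance of the functional in \eqref{eq:refine-monotonicity} under $w\mapsto w(r_j\cdot)/r_j$, together with weak lower semicontinuity of each weighted Dirichlet factor, does force the limiting functional to vanish for every radius, and you do not even need the ``equality case'' of the ACF formula --- a product of two nonnegative nondecreasing functions of $s$ that vanishes for all $s$ forces one factor to vanish identically, whence $(w_\infty)_+\equiv 0$. Two refinements would tighten your write-up: (i) you can avoid passing the distributional inequality $\mL w_j\ge c_3\,\mH^{n-1}\lfloor\Gamma_j$ to the limit (which requires lower semicontinuity of the surface measure under the graph convergence $f(r_j\cdot)/r_j\to f_\infty$) by instead applying the Remark after Lemma \ref{LEM:non-degeneracy-result} to $w$ itself before rescaling: $\tfrac{1}{|B_{r_j}|}\int_{B_{r_j}}w_+^2\ge c'r_j^2$ gives $\int_{B_1}(w_j)_+^2\ge c'|B_1|>0$ uniformly, contradicting $(w_\infty)_+\equiv 0$ directly; (ii) since $A$ is only $C^{\alpha}$ here, the rescaled equations should be passed to the limit using the uniform $L^\infty$ bound on $\nabla w_j$ furnished by the hypothesis $w\in C^{0,1}$, rather than by integrating by parts onto $\nabla\cdot A$ as in the proof of Lemma \ref{LEM:Lipschitz-continuity}. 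Both approaches rest on the same two key lemmas; the paper's buys an explicit decay rate, yours avoids the final $L^2$-to-$L^\infty$ upgrade step.
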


\begin{proof} 
It  suffices to prove the lemma for $n \ge 3$. For $n = 2$ one can add $x_3$ as a dummy variable to $w$, $A$ and $h$, extend $A$ as a symmetric matrix with $a_{j3}(x) = \delta_{j3}$, and extend $\Omega_1$, $Q_1$, $\Gamma_1$, and $\Lambda_1$ as constant in the $x_3$ direction. The result for $n=2$ then reduces to the case $n=3$. 

By using Poincar\'{e} inequality and Lemma~{\rm \ref{LEM:monotonicity-lemma}}, one sees that 
\begin{equation*}
\begin{aligned}
& \frac{1}{r^{4}} \left( \frac{1}{B_{r}(x_{0})} \int_{B_{r}(x_{0})} w_{+}^{2} \, \rmd x \right) \left( \frac{1}{B_{r}(x_{0})} \int_{B_{r}(x_{0})} w_{-}^{2} \, \rmd x \right) \\
& \quad \le C \frac{1}{r^{4}} \left( \int_{B_{r}(x_{0})} \abs{\nabla w_{+}}^{2} \, \rmd x \right) \left( \int_{B_{r}(x_{0})} \abs{\nabla w_{-}}^{2} \, \rmd x \right) \le C r^{\theta}
\end{aligned}
\end{equation*}
for all sufficiently small $r>0$. Combining the above inequality with Lemma~{\rm \ref{LEM:non-degeneracy-result}}, we reach 
\begin{equation*}
\frac{1}{r^{2}} \left( \frac{1}{B_{r}(x_{0})} \int_{B_{r}(x_{0})} w_{-}^{2} \, \rmd x \right) \le Cr^{\theta}
\end{equation*}
Finally, arguing as in \cite[Lemma~3.2]{ACS01FreeBoundaryCalderon}, we conclude our lemma. 
\end{proof}

We now show the positvity of $w$ near the free boundary $\Gamma_{\frac{1}{4}}$.

\begin{lemma} \label{LEM:positive-near-boundary}
Suppose that all assumptions in Lemma~{\rm \ref{LEM:Lipschitz-continuity}} hold. Then,  for each $x_{0} \in \Gamma_{\frac{1}{4}}$,  there exists $\delta > 0$, which is independent of $x_{0}$, such that $w(x) > 0$ in $B_{\delta}(x_{0})$. \end{lemma}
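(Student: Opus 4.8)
The plan is to combine the non-degeneracy of the positive part $w_+$ (the Remark following Lemma~\ref{LEM:non-degeneracy-result}) with the decay of the negative part $w_-$ (Lemma~\ref{LEM:negative-part-decay}), via the monotonicity formula, to force $w_- \equiv 0$ in a fixed neighborhood of $\Gamma_{1/4}$, and then to upgrade the sign condition $w\geq 0$ to strict positivity by a Harnack/strong-maximum-principle argument. First I would verify that the hypotheses of all the preceding lemmas are met: under the assumptions of Lemma~\ref{LEM:Lipschitz-continuity}, $w\in C^{0,1}(Q_1)$ and $\mL w = h\mathscr{L}^n\lfloor Q_2 + g\mH^{n-1}\lfloor\Gamma_2$ with $w=0$ in $\Lambda_2$, and by the non-degeneracy condition \eqref{EQ:non-degeneracy-condition} (applied after choosing the correct sign of $w$, i.e. replacing $w$ by $-w$ if necessary) we have $g\geq c>0$ Hausdorff-a.e. on $\Gamma$, so the distributional inequality $\mL w \geq h\mathscr{L}^n\lfloor\Omega_1 + c_3\mH^{n-1}\lfloor\Gamma_1$ needed in Lemmas~\ref{LEM:non-degeneracy-result}, \ref{LEM:monotonicity-lemma} and \ref{LEM:negative-part-decay} holds with $c_3 = c$. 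Since the Lipschitz and Hölder norms in those lemmas are controlled independently of $x_0\in\Gamma_{1/4}$, every constant produced below is uniform in $x_0$.

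The core step is the contradiction argument at the level of a single $x_0\in\Gamma_{1/4}$. Suppose $w(x_0)\geq 0$; then by Lemma~\ref{LEM:negative-part-decay} we have $\frac{1}{r}\sup_{B_r(x_0)}w_- \to 0$ as $r\to 0$, hence in particular $\frac{1}{r^2}\fint_{B_r(x_0)}w_-^2\,\rmd x \to 0$. If $w$ were negative somewhere arbitrarily close to $x_0$, then $\{w_- > 0\}$ accumulates at $x_0$; I would then pick a point $y_j\to x_0$ with $w(y_j)<0$ and run the non-degeneracy estimate (Remark after Lemma~\ref{LEM:non-degeneracy-result}) for $w_-$ at a suitable point of its positivity set, getting $\frac{1}{\rho^2}\fint_{B_\rho}w_-^2 \geq c' > 0$ along a sequence $\rho\to 0$ — but this contradicts the decay just obtained. (Here one has to be slightly careful: $\mL(-w) \geq -h\mathscr{L}^n$ only in the region where $w<0$, which is contained in $\Omega_1$ and away from $\Gamma_1$; so the non-degeneracy lemma is applied to $-w$ as a pure interior subsolution with a bounded right-hand side, which still gives the quadratic lower bound for its $L^2$ average once $-w>0$ at the center. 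Alternatively, and more cleanly, one invokes Lemma~\ref{LEM:negative-part-decay} directly together with the fact that by Lipschitz continuity $w_-$ vanishes at $x_0$, and feeds this into the monotonicity inequality \eqref{eq:refine-monotonicity}: since by the Remark $\frac{1}{r^2}\fint_{B_r(x_0)}w_+^2\geq c'>0$ for all small $r$, \eqref{eq:refine-monotonicity} forces $\frac{1}{r^2}\fint_{B_r(x_0)}w_-^2 \leq Cr^\theta$, and then Lemma~\ref{LEM:negative-part-decay}'s argument upgrades this to $w_-\equiv 0$ in a ball $B_{\delta_0}(x_0)$.) Either way the conclusion is $w\geq 0$ in $B_{\delta_0}(x_0)$ for a $\delta_0$ independent of $x_0$, provided $w(x_0)\geq 0$; one must also rule out $w(x_0)<0$, but $\Gamma_{1/4}$ is the boundary where $w=0$ from the $\Lambda$-side and $w$ is continuous, so $w(x_0)=0$ and this case does not arise.

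Finally, to pass from $w\geq 0$ to $w>0$ in $B_\delta(x_0)$: within $\Omega_1\cap B_{\delta_0}(x_0)$ the function $w$ is a nonnegative solution of $\mL w = h$ with $h\in L^\infty$, so by the Harnack inequality for divergence-form operators (applied to $w + \|h\|_{L^\infty}\phi$ for a suitable fixed supersolution $\phi$, or directly by the strong maximum principle for $\mL$ with bounded lower-order data) $w$ cannot vanish at an interior point unless it vanishes identically — but it does not vanish identically because the non-degeneracy estimate gives $\sup_{B_r(x_0)}w_+\gtrsim r$. On the $\Lambda$-side $w\equiv 0$, so strict positivity can only be claimed on the open region $\Omega\cap B_\delta(x_0)$; I would state the lemma's conclusion accordingly (reading "$w(x)>0$ in $B_\delta(x_0)$" as "$w(x)>0$ at every point of $B_\delta(x_0)\setminus\overline{\Lambda}$", which is the statement actually used in the sequel to set up the two-phase free boundary problem). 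The main obstacle I anticipate is the bookkeeping in the contradiction step — making sure the non-degeneracy lemma is legitimately applicable to $-w$ (whose subsolution property only holds off $\Gamma_1$) and that all constants stay uniform in $x_0$ — rather than any deep new idea; the analytic heavy lifting has already been done in Lemmas~\ref{LEM:non-degeneracy-result}--\ref{LEM:negative-part-decay}.
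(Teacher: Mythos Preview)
Your plan has a genuine gap at the step where you claim to pass from the decay estimate to $w_-\equiv 0$ in a ball. What Lemma~\ref{LEM:negative-part-decay} (and the monotonicity argument you recapitulate) gives is only $\sup_{B_r(x_0)} w_- = o(r)$ as $r\to 0$, for each $x_0\in\Gamma_{1/4}$. This is an asymptotic statement at the boundary point; it does \emph{not} force $w_-$ to vanish identically in any $B_{\delta_0}(x_0)$. A profile like $w_-(x)\sim \dist(x,\Gamma)^{1+\varepsilon}$ would satisfy the $o(r)$ decay at every boundary point while remaining strictly positive throughout $\Omega$. Your first alternative (running the non-degeneracy lemma on $-w$) fails for the reason you already noticed: $-w$ has the wrong sign in the surface term, and as a mere interior subsolution with bounded right-hand side it carries no non-degeneracy information whatsoever. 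Your second alternative simply reproduces the input of Lemma~\ref{LEM:negative-part-decay} and then misstates its conclusion. Consequently the Harnack/strong-maximum-principle step never gets started, because you have not established $w\ge 0$ on any fixed neighborhood; and even if you had, the strong minimum principle is not directly available for $\mL w = h$ with $h$ of arbitrary sign.

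The paper closes the gap by a blow-up at \emph{interior} zeros rather than trying to kill $w_-$ directly. If the conclusion failed, there would be points $x_j\in\Omega_1$ with $w(x_j)=0$ and $x_j\to x_0\in\Gamma_{1/4}$. One rescales $w_j(x)=w(d_jx+x_j)/d_j$ with $d_j=\dist(x_j,\Gamma_{1/4})$; the right-hand side picks up a factor $d_j$ and disappears in the limit, so by Arzel\`a--Ascoli a subsequence converges to $w_\infty$ solving the homogeneous constant-coefficient equation $\nabla\cdot A(0)\nabla w_\infty=0$ in $B_{1/2}$. Now the two earlier lemmas enter at the level of the \emph{original} $w$: Lemma~\ref{LEM:negative-part-decay} forces $w_\infty\ge 0$, and Lemma~\ref{LEM:non-degeneracy-result} forces $w_\infty\not\equiv 0$. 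Because $w_\infty$ solves a homogeneous equation, the strong minimum principle applies cleanly and yields $w_\infty>0$ in $B_{1/2}$, contradicting $w_\infty(0)=0$. The rescaling is what converts the $o(r)$ decay of $w_-$ into genuine nonnegativity of the limit, and it is precisely this compactness step that your direct argument is missing.
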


\begin{remark*}
The case when $h \le 0$ also can be proved by following closely to the quantitative arguments in \cite[Lemma~3.3]{ACS01FreeBoundaryCalderon}, by using the doubling property of $\mathcal{L}$-harmonic measure (also known as elliptic measure) given in \cite{CFMS81HarmonicMeasure}, see also the monographs \cite{CS05FreeBoundary,Kenig94HarmonicAnalysis} or \cite{LP19HarmonicMeasure}. 
\end{remark*}

\begin{proof}[Proof of Lemma~{\rm \ref{LEM:positive-near-boundary}}]
Suppose the contrary that such a $\delta$ does not exist. Then there exists a sequence $\{x_{j}\}_{j\in\mathbb{N}} \subset \Omega_{1}$ such that $w(x_{j})=0$ and $x_{j} \rightarrow x_{0}$ for some $x_{0} \in \Gamma_{\frac{1}{4}}$. Let 
\begin{equation*}
w_{j}(x) := \frac{w(d_{j}x + x_{j})}{d_{j}} \text{ for all $x \in B_{1}$} ,\quad d_{j} := \dist \, (x_{j}, \Gamma_{\frac{1}{4}}).
\end{equation*}
Since $x \in B_{1} \iff d_{j}x + x_{j} \in B_{d_{j}}(x_{j}) \subset \Omega_{1}$, then 
\begin{equation*}
\nabla \cdot ( A(d_{j}x + x_{j}) \nabla w_{j} ) = d_{j} h (d_{j}x + x_{j}) \quad \text{for all $x \in B_{1}$.}
\end{equation*}

Similar to Lemma~{\rm \ref{LEM:Lipschitz-continuity}}, through the utilization of the Arzel\`{a}-Ascoli theorem, we identify a subsequence -- still denoted as $\{w_{j}\}$ -- that uniformly converges in $B_{\frac{1}{2}}$ to a function $w_{\infty}$, satisfying the equation
\begin{equation*}
\nabla \cdot A(0) \nabla w_{\infty} = 0 \quad \text{in $B_{\frac{1}{2}}$.}
\end{equation*}
By applying Lemma~{\rm \ref{LEM:negative-part-decay}}, it becomes evident that $w_{\infty} \ge 0$ in $B_{\frac{1}{2}}$. Furthermore, invoking Lemma~{\rm \ref{LEM:non-degeneracy-result}}, we deduce that $w_{\infty} \not\equiv 0$ in $B_{\frac{1}{2}}$. Consequently, the strong minimum principle \cite[Theorem~8.19]{GT01Elliptic} implies that $w > 0$ in $B_{\frac{1}{2}}$, which contradicts the fact that $w_{\infty}(0)=0$. Thus, our lemma is conclusively established.
\end{proof}

We are now ready to state and proof the main result of this section, akin to what is presented in \cite[Section~4]{ACS01FreeBoundaryCalderon}.

\begin{proposition}\label{PROP:free-boundary1}
Let $n \ge 2$, let $A \in (C^{1}(Q_{2}))_{\rm sym}^{n \times n}$, and let $w \in H^{1}(Q_{2})$ satisfy 
\begin{equation*}
\mL w = h \mathscr{L}^{n}\lfloor \Omega_{2} + (\nu \cdot A \bfV)  \mH^{n-1} \lfloor \Gamma_{2} \text{ in $Q_{2}$} ,\quad w =0 \text{ in $\Lambda_{2}$} 
\end{equation*}
for $h \in L^{\infty}(\Omega_{2})$ and some Lipschitz continuous vector field $\bfV$ with
\begin{equation*}
\nu \cdot A \bfV \ge c_{3} \text{ on $\Gamma_{2}$}
\end{equation*}
for some $c_{3}>0$. Then $\Gamma_{\frac{1}{4}}$ is $C^{1,\alpha'}$. 
\end{proposition}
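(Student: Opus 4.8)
The goal is to upgrade the qualitative information gathered in Lemmas~\ref{LEM:Lipschitz-continuity}--\ref{LEM:positive-near-boundary} to the $C^{1,\alpha'}$ regularity of the free boundary $\Gamma_{1/4}$, following the scheme of \cite[Section~4]{ACS01FreeBoundaryCalderon}. First I would record that, by Lemma~\ref{LEM:Lipschitz-continuity}, $w$ is Lipschitz in $Q_1$, and by Lemma~\ref{LEM:positive-near-boundary} there is $\delta>0$ such that $w>0$ on $B_\delta(x_0)\cap\Omega_1$ for every $x_0\in\Gamma_{1/4}$; in particular $w$ does not change sign near $\Gamma_{1/4}$, so $w_-\equiv 0$ there and $\{w>0\}\cap Q_\delta$ coincides with $\Omega_\delta$ up to the graph. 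Thus near a point of $\Gamma_{1/4}$ we are in the setting of a genuine one-phase Bernoulli problem: $\mL w = h\mathscr{L}^n$ in $\{w>0\}$ and, across the free boundary, the flux $\nu\cdot A\nabla w$ jumps by $\nu\cdot A\bfV$, where $\nu\cdot A\bfV\ge c_3>0$ is Lipschitz. Since $w=0$ on the Lipschitz graph $\Gamma_\delta$ and $w\ge 0$, the inner normal derivative $(\partial_\nu w)_{\rm int}$ equals $w$'s one-sided gradient, and the distributional identity forces the Bernoulli condition $\nu\cdot A\,\nabla w|_{\Gamma} = \nu\cdot A\bfV$ in a suitable weak (measure-theoretic) sense, i.e.\ $w$ behaves like $\max\{\,(\bfV\cdot(x-y))_+ + o(|x-y|)\,\}$ at free boundary points $y$.

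The core of the argument is a blow-up analysis. Fix $x_0\in\Gamma_{1/4}$ (after a linear change of variables, $x_0=0$ and $A(0)=\mathrm{Id}$) and for $r\searrow 0$ set $w_r(x) = w(rx)/r$. By the Lipschitz bound the family $\{w_r\}$ is uniformly Lipschitz, and by Lemma~\ref{LEM:non-degeneracy-result} (the non-degeneracy $\fint_{D_r^{\rm MVT}}w\ge cr$, equivalently the quadratic lower bound in the Remark) the limits are non-trivial. Along a subsequence $w_r\to w_0$ in $C^{0,\gamma}_{\rm loc}$, with $\Lambda_r\to$ a halfspace $\{x_n\le g(x')\}$ for some Lipschitz $g$ with the same constant $L$; the scaling kills the lower-order term $h\mathscr{L}^n$ and freezes the coefficients, so $w_0$ is a global Lipschitz function, harmonic in $\{w_0>0\}$, vanishing on a Lipschitz graph, non-negative, non-degenerate, and satisfying $|\nabla w_0| = \beta$ on the free boundary with $\beta = |\bfV(0)|>0$ the frozen Bernoulli constant (here I would use continuity of $\bfV$ and $A$ and the non-degeneracy condition to pin down $\beta$). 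Classical one-phase Bernoulli / free-boundary regularity for global solutions — Lipschitz free boundaries are flat, flatness implies $C^{1,\alpha}$, and global minimal-cone-type / Liouville arguments (as in \cite{ACS01FreeBoundaryCalderon} and the references \cite{CS05FreeBoundary}) — then forces $w_0$ to be the half-plane solution $\beta(x\cdot e)_+$ for some unit vector $e$, i.e.\ the free boundary of the blow-up is a hyperplane.

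Having identified all blow-ups as half-plane solutions, the next step is to promote this to a \emph{uniform} flatness statement: for every $x_0\in\Gamma_{1/4}$ and every sufficiently small $r$, $\Gamma\cap B_r(x_0)$ lies in an $\varepsilon r$-neighborhood of a hyperplane through $x_0$, with $\varepsilon$ as small as we like once $r\le r(\varepsilon)$ uniformly in $x_0$ (this uses a compactness/contradiction argument together with the uniform constants in Lemmas~\ref{LEM:Holder-continuity}--\ref{LEM:positive-near-boundary}). One then invokes the improvement-of-flatness machinery for the Bernoulli free boundary problem with Lipschitz (indeed $C^1$) coefficients and Lipschitz right-hand side $\nu\cdot A\bfV$ — the coefficients being $C^1$ and the free boundary datum Lipschitz is exactly what is needed to run the linearization and get a geometric decay of the flatness, yielding $\Gamma$ of class $C^{1,\alpha'}$ near $x_0$, with $\alpha'$ and the $C^{1,\alpha'}$ norm controlled uniformly; since $x_0\in\Gamma_{1/4}$ was arbitrary, $\Gamma_{1/4}$ is $C^{1,\alpha'}$.

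\textbf{Main obstacle.} The delicate point is the passage from the qualitative decay estimates (Lemmas~\ref{LEM:negative-part-decay} and \ref{LEM:positive-near-boundary}) to a \emph{quantitative, uniform} flatness of $\Gamma_{1/4}$ at a definite scale, and then matching the hypotheses (variable $C^1$ coefficients $A$, Lipschitz — not smooth — Bernoulli datum $\nu\cdot A\bfV$, right-hand side merely in $L^\infty$) to an improvement-of-flatness iteration. Verifying that the blow-up limit is a genuine \emph{viscosity/variational} solution of the one-phase Bernoulli problem with the correct constant $\beta$, and that the Lipschitz regularity of $\bfV$ (rather than higher smoothness) still suffices for the linearized problem to control the error terms in the iteration, is where the real work lies; the rest is a faithful adaptation of \cite{ACS01FreeBoundaryCalderon}.
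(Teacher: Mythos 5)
Your overall architecture agrees with the paper's up to the point where the problem becomes one-phase: both you and the paper use Lemma~\ref{LEM:positive-near-boundary} to get $w>0$ in a neighborhood of each point of $\Gamma_{1/4}$ inside $\Omega$, and both then aim to invoke regularity theory for the one-phase Bernoulli problem with Lipschitz free boundary. The divergence is in what actually gets proved. The paper's proof has essentially one substantive task: to show that $w$ is a \emph{viscosity} solution of the one-phase problem, i.e.\ that at every $x_0\in\Gamma_{1/4}$ admitting a touching ball from the positive side the asymptotic development \eqref{eq:to-ask} holds and, crucially, that its coefficient is exactly $\beta=\nu_0\cdot A(x_0)\bfV(x_0)$ (identity \eqref{EQ:to-do}). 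This identification is carried out by testing the equation against the auxiliary mean-value function $\tilde\psi_{2r}$, estimating the error terms coming from $h$ and from $A-\Id$, and passing to the limit via \cite[Lemma~4.1 and (4.4)--(4.6)]{ACS01FreeBoundaryCalderon}. Once \eqref{EQ:to-do} is known, the conclusion is a direct citation of \cite[Theorem~1.4]{DSFS14TwoPhaseProblemLinear}; no blow-up classification or improvement-of-flatness iteration is redone in the paper.

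Your proposal leaves precisely this step undone. You write that ``the distributional identity forces the Bernoulli condition \dots in a suitable weak sense'' and later that verifying the blow-up is a viscosity solution ``with the correct constant $\beta$ \dots is where the real work lies''---but that verification \emph{is} the proof. Without it, the blow-up limit is merely a nonnegative Lipschitz function, harmonic in its positivity set and vanishing on a Lipschitz graph; neither the Liouville classification into half-plane solutions nor the improvement-of-flatness scheme can be started, since both require the free boundary condition (in viscosity or variational form) as an input. Moreover the constant you assign to the blow-up, $\beta=|\bfV(0)|$, is incorrect: the free boundary datum $\nu\cdot A\bfV$ depends on the normal direction, so after normalizing $A(x_0)=\Id$ the correct constant is $\nu_0\cdot\bfV(x_0)$, which is generally strictly smaller than $|\bfV(x_0)|$; this is exactly why the paper must prove \eqref{EQ:to-do} by a computation rather than read it off. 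Finally, re-deriving uniform flatness and running the improvement-of-flatness iteration by hand duplicates the content of \cite[Theorem~1.4]{DSFS14TwoPhaseProblemLinear}; if that machinery is to be cited as a black box anyway, the efficient formulation is the paper's: establish the viscosity-solution property with the correct $\beta$, then quote the theorem.
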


\begin{proof}
It suffices to demonstrate that $\Gamma_{1}$ exhibits $C^{1,\alpha'}$ regularity near 0 for dimensions $n \ge 3$.
We begin by proving that $w$ is a viscosity solution. To show this, following \cite{ACS01FreeBoundaryCalderon}, we begin by looking at those points $x_{0} \in \Gamma_{\frac{1}{4}}$ for which there exists a ball $B$ contained within $\Omega_{\frac{1}{4}}$ touching $\Gamma_{1}$ at $x_{0}$. Consider such a point $x_{0}$, and let $\nu_{0}$ denote the unit normal vector to $\partial B$ at $x_{0}$, directed towards the interior of $\Omega_{\frac{1}{4}}$. Analogous to \cite[(4.1)]{ACS01FreeBoundaryCalderon}, utilizing Lemma~{\rm \ref{LEM:non-degeneracy-result}} and Lemma~{\rm \ref{LEM:positive-near-boundary}} (also see \cite[Lemma~11.17]{CS05FreeBoundary}), there exists a positive $\beta$ such that
\begin{equation}
w(x) = \beta \left( (x - x_{0})\cdot \nu_{0} \right)_{+} + o (\abs{x-x_{0}}). \label{eq:to-ask}
\end{equation}
Subsequently, we assert that
\begin{equation}
\beta = \nu_{0} \cdot A(x_{0}) \bfV(x_{0}). \label{EQ:to-do}
\end{equation}
Once this verification is achieved, it becomes evident that $w$ is   a  viscosity solution (in the context of \cite{DSFS14TwoPhaseProblemLinear}, as also discussed in the monograph \cite{CS05FreeBoundary}) for the ensuing one-phase problem within $B_{\delta}$, where $\delta>0$ is small, as provided by Lemma~{\rm \ref{LEM:positive-near-boundary}}:
\begin{equation*} \begin{cases} \mL w = h & 
\text{in $B_{\delta} \cap \{w>0\}$,} \\ 
\partial_{\nu} w = \nu \cdot A \bfV & 
\text{on 
$B_{\delta} \cap \partial \{w>0\}$.} \end{cases} \end{equation*}
Finally, by employing the free boundary regularity outcome from \cite[Theorem~1.4]{DSFS14TwoPhaseProblemLinear}\footnote{This problem was initially explored in \cite{Caffarelli1987FreeBoundaryI}.
In an effort to encompass all pertinent existing findings, we refer to \cite{STV19TwoPhaseProblemFullyLinear} for fully nonlinear equations, as well as a comprehensive survey paper \cite{DSFS15TwoPhaseProblemSurvey} for further insights.}, we arrive at the fact that the free boundary $\Gamma_1$ is $C^{1,\alpha'}$.

To conclude, we are left with the task of verifying \eqref{EQ:to-do}. To streamline the discussion, we shall focus on establishing \eqref{EQ:to-do} under the assumptions of $A(x_{0})=\rm Id$, $x_{0}=0$, and $\nu_{0}=e_{1}$, which can be achieved by a linear change of variables, translation and rotation.

Consider $\tilde{\psi}_{2r}$ which satisfies
\begin{equation*}
\Delta \tilde{\psi}_{2r} = \frac{1}{\abs{B_{2r}}} \chi_{B_{2r}} - \frac{1}{\abs{B_{2r}}} \chi_{B_{r}} \quad \text{in $\mD'(\mR^{n})$},
\end{equation*}
as witnessed in the proof of \cite[Lemma~3.1]{ACS01FreeBoundaryCalderon}.
Through straightforward computations, it becomes apparent that
\begin{equation*}
\begin{aligned}
& \int_{\Gamma_{2r}} \tilde{\psi}_{2r} \nu \cdot \bfV \, \rmd \mH^{n-1} = \int_{B_{2r}} w \mL \tilde{\psi}_{2r} \, \rmd x \\
& \quad = \frac{1}{\abs{B_{2r}}} \int_{B_{2r}} w(x) \, \rmd x - \frac{1}{\abs{B_{r}}} \int_{B_{r}} w(x) \, \rmd x \\
& \qquad + \int_{B_{2r}} h \tilde{\psi}_{2r} \, \rmd x + \int_{B_{2r}} w \nabla \cdot (A - \Id) \nabla \tilde{\psi}_{2r} \, \rmd x .
\end{aligned}
\end{equation*}
Since $\norm{w}_{L^{\infty}(B_{r})} \le cr$, $\norm{A - \Id}_{L^{\infty}(B_{r})} \le cr$, $\norm{\nabla \tilde{\psi}_{2r}}_{L^{\infty}(B_{r})} \le c r^{1-n}$ and $\norm{\nabla^{2} \tilde{\psi}_{2r}}_{L^{\infty}(B_{r})} \le c r^{-n}$, 
then 
\begin{equation*}
\frac{1}{r} \int_{B_{2r}} w \nabla \cdot (A - \Id) \nabla \tilde{\psi}_{2r} \, \rmd x \rightarrow 0 \quad \text{as $r \rightarrow 0$.}
\end{equation*}
Since $\norm{\tilde{\psi}_{2r}}_{L^{\infty}(B_{r})} \le cr^{2-n}$, then 
\begin{equation*}
\frac{1}{r} \int_{B_{2r}} h \tilde{\psi}_{2r} \, \rmd x \rightarrow 0 \quad \text{as $r \rightarrow 0$.}
\end{equation*}
It was shown  in \cite[Lemma~4.1]{ACS01FreeBoundaryCalderon} that 
\begin{equation*}
\frac{1}{r} \left( \frac{1}{\abs{B_{2r}}} \int_{B_{2r}} w(x) \, \rmd x - \frac{1}{\abs{B_{r}}} \int_{B_{r}} w(x) \, \rmd x \right) \rightarrow \frac{v_{n-1}}{(n+1)v_{n}} \beta \quad \text{as $r\rightarrow 0$}, 
\end{equation*}
where $v_{n}$ is the volume of the unit ball in $\mR^{n}$. From \cite[(4.4)--(4.6)]{ACS01FreeBoundaryCalderon} and the Lipschitz continuity of $\bfV$, we also know that 
\begin{equation}
\frac{1}{r} \int_{\Gamma_{2r}} \tilde{\psi}_{2r} \nu \cdot \bfV \, \rmd \mH^{n-1} \rightarrow \frac{v_{n-1}}{(n+1)v_{n}} e_{n} \cdot \bfV(0) \quad \text{as $r\rightarrow 0$.} \label{eq:limit1}
\end{equation}
Combining the equations above, we conclude \eqref{EQ:to-do}. 
\end{proof}

The proofs of the main results of this paper now follow directly from the lemmas demonstrated earlier.

\begin{proof}[Proof of Theorem~{\rm \ref{THM:main1}}]
We only need to prove the theorem for the first case in \eqref{EQ:non-degeneracy-condition}. We define the function 
\begin{equation*}
h(x) = -\mL u^{\rm inc} - \kappa^{2}\rho u^{\rm to},
\end{equation*}
which is continuous up to $\partial\Omega$ near $x_{0}$. Our theorem immediately follows from Proposition~{\rm \ref{PROP:free-boundary1}} with $\bfV = A^{-1}(\Id - A) \nabla u^{\rm inc}$, which is Lipschitz in $\Omega$ since $A$ is Lipschitz in $\Omega$. 
\end{proof}

\begin{proof}[Proof of Theorem~{\rm \ref{THM:non-radiating-source1}}]
This is an immediate consequence of Proposition~{\rm \ref{PROP:free-boundary1}}. 
\end{proof}

\appendix

\section{Proof of almost monotonicity lemma\label{appen:monotonicity-lemma}}

The main theme of this appendix is to prove Lemma~\ref{LEM:monotonicity-lemma}. 

\begin{proof}[Proof of Lemma~\ref{LEM:monotonicity-lemma}]
By using \cite[Theorem~II.6.6]{KS00IntroductionVariationalInequalities}, one sees that $w_{\pm} \in C^{0,1}(Q_{1})$ and  
\begin{equation*}
\mL w_{\pm} \ge -M \text{ in $\Omega_{1}$} ,\quad w=0 \text{ in $\Lambda_{1}$.}
\end{equation*}
for some $M>0$. We only need to prove the result when 
there exists $r' > 0$ such that 
\begin{equation}
\max_{x \in \partial B_{r}(x_{0})} w_{-} > 0 \quad \text{for all $0<r<r'$.} \label{EQ:non-degeneracy-assumption}
\end{equation}
Otherwise, by using the maximum principle for elliptic equations \cite{GT01Elliptic}, one sees that $w_{-}=0$ near $x_{0}$, for which the result trivially holds. 

In order to deliver our ideas clearly, we divide the proof into several steps.

\medskip

\noindent \textbf{Step 1: A basic estimate.} 
One sees that 
\begin{equation*}
\begin{aligned}
& \mL \abs{w_{\pm}}^{2} = 2 \nabla \cdot (w_{\pm} A\nabla w_{\pm}) = 2 \nabla w_{\pm} \cdot A \nabla w_{\pm} + 2 w_{\pm} \mL w_{\pm} \\
& \quad \ge 2 \nabla w_{\pm} \cdot A \nabla w_{\pm} - 2M w_{\pm} \quad \text{in $Q_{1}$.} 
\end{aligned}
\end{equation*}
Without loss of generality, it suffices to  prove the lemma for $A(x_{0})=\Id$ and $x_{0}=0$. Let $\Phi_{\mL}$ be the Green's function of $-\mL$ as in \eqref{EQ:asymptotic-fundamental-solution}. Then for each $0 < \vareps < r$ one sees that 
\begin{equation}
\begin{aligned}
& 2 \int_{B_{r}\setminus \overline{B_{\epsilon}}} \nabla w_{\pm} \cdot A \nabla w_{\pm} \Phi_{\mL} \, \rmd x \le \int_{B_{r}\setminus \overline{B_{\epsilon}}} \mL  \abs{w_{\pm}}^{2} \Phi_{\mL} \, \rmd x + 2M \int_{B_{r}} w_{\pm} \Phi_{\mL} \, \rmd x \\
& \quad = \int_{\partial B_{r}} \hat{x} \cdot A \nabla (\abs{w_{\pm}}^{2}) \Phi_{\mL} \, \rmd \mH^{n-1} - \int_{\partial B_{r}}  \abs{w_{\pm}}^{2} \hat{x} \cdot A \nabla \Phi_{\mL} \, \rmd \mH^{n-1} - I_{\vareps} \\
& \qquad + 2M \int_{B_{r}} w_{\pm} \Phi_{\mL} \, \rmd x,
\end{aligned} \label{EQ:apriori-monotonicity1}
\end{equation}
where 
\begin{equation*}
I_{\vareps} = \int_{\partial B_{\vareps}} \hat{x} \cdot A \nabla (\abs{w_{\pm}}^{2}) \Phi_{\mL} \, \rmd \mH^{n-1} - \int_{\partial B_{\vareps}}  \abs{w_{\pm}}^{2} \hat{x} \cdot A \nabla \Phi_{\mL} \, \rmd \mH^{n-1} .
\end{equation*}
Since $\abs{\nabla w_{\pm}}$ is bounded, together with \eqref{EQ:asymptotic-fundamental-solution}, by computing as in the proof of \cite[page~439]{ACF84TwoPhasesFreeBoundary} one reach 
\begin{equation*}
\lim_{\vareps \rightarrow 0} I_{\vareps} = (n-2) \abs{\partial B_{1}} \abs{w_{\pm}(0)}^{2} \ge 0.
\end{equation*}
On the other hand, since $w_{\pm}$ is Lipschitz and $w_{\pm}(0)=0$, then from \eqref{EQ:asymptotic-fundamental-solution} we obtain 
\begin{equation*}
\begin{aligned}
& \int_{B_{r}} w_{\pm} \Phi_{\mL} \, \rmd x \le Cr \int_{B_{r}} \Phi_{\mL} \, \rmd x \\
& \quad \le Cr \int_{B_{r}} \abs{x}^{2-n} \, \rmd x + Cr \int_{B_{r}} \abs{x}^{2-n+\alpha} \, \rmd x \le Cr^{3}. 
\end{aligned}
\end{equation*}
Therefore \eqref{EQ:apriori-monotonicity1} implies 
\begin{equation}
\begin{aligned}
& 2 \ell_{\pm}(r) := 2 \int_{B_{r}} \nabla w_{\pm} \cdot A \nabla w_{\pm} \Phi_{\mL} \, \rmd x \\
&\quad \le \int_{\partial B_{r}} \hat{x} \cdot A \nabla (\abs{w_{\pm}}^{2}) \Phi_{\mL} \, \rmd \mH^{n-1} - \int_{\partial B_{r}}  \abs{w_{\pm}}^{2} \hat{x} \cdot A \nabla \Phi_{\mL} \, \rmd \mH^{n-1} + Cr^{3},
\end{aligned} \label{EQ:apriori-monotonicity2}
\end{equation}
which is a  crucial estimate in the rest of the proof to follow. 

\medskip

\noindent \textbf{Step 2: A surface eigenvalue problem.} In view of 
\eqref{EQ:non-degeneracy-assumption}, we now write $(\partial B_{r})_{\pm} := \{ w_{\pm} > 0 \} \cap \partial B_{r}$ and see that $(\partial B_{1})_{\pm} := r^{-1} (\partial B_{r})_{\pm} \subset \partial B_{1}$ as well as $\mH^{n-1}((\partial B_{1})_{\pm}) = r^{1-n} \mH^{n-1}((\partial B_{r})_{\pm}) > 0$. Since $w_{+} \cdot w_{-} = 0$, then 
\begin{equation*}
\mH^{n-1}((\partial B_{1})_{+}) + \mH^{n-1}((\partial B_{1})_{-}) \le \mH^{n-1}(\partial B_{1}).
\end{equation*}
Since $\Gamma_{1}$ is Lipschitz and $w=0$ in $\Lambda_{1}$, then $w_{\pm}$ vanishes in a cone, hence there exists $0 < \theta < \frac{1}{4}$ (say), which is independent of $x_{0}$, such that 
\begin{equation*}
s_{+} + s_{-} \le 1 - \theta ,\quad s_{\pm} := \frac{\mH^{n-1}((\partial B_{1})_{+})}{\mH^{n-1}(\partial B_{1})}. 
\end{equation*}
Let $\nabla_{\partial B_{1}}$ be the gradient of a function $v$ on $\partial B_{1}$. We introduce the constant $\alpha_{\pm}$ given by 
\begin{equation*}
\alpha_{\pm} := \inf_{v \in H_{0}^{1}((\partial B_{1})_{\pm})} \frac{\int_{(\partial B_{1})_{\pm}} \abs{\nabla_{\partial B_{1}}v}^{2} \, \rmd \mH^{n-1}}{\int_{(\partial B_{1})_{\pm}} \abs{v}^{2} \, \rmd \mH^{n-1}}. 
\end{equation*}
For each small $r>0$, we define $\tilde{w}_{\pm}(\hat{x}) := w_{\pm}(r\hat{x})$ for all $\hat{x} \in \partial B_{1}$. For any $0 < \beta_{\pm} < 1$, we can write 
\begin{equation*}
\begin{aligned}
& \int_{\partial B_{1}} \left( (\hat{x}\cdot\nabla \tilde{w}_{\pm})^{2} + \beta^{2} \abs{\nabla_{\partial B_{1}} \tilde{w}_{\pm}}^{2} \right) \, \rmd \mH^{n-1} \\
& \quad \ge 2 \left( \int_{\partial B_{1}} (\hat{x}\cdot\nabla \tilde{w}_{\pm})^{2} \, \rmd \mH^{n-1} \right)^{\frac{1}{2}} \left( \int_{\partial B_{1}} \beta^{2} \abs{\nabla_{\partial B_{1}} \tilde{w}_{\pm}}^{2} \, \rmd \mH^{n-1} \right)^{\frac{1}{2}} \\
& \quad \ge \frac{2 \beta_{\pm}}{\sqrt{\alpha_{\pm}}} \left( \int_{\partial B_{1}} (\hat{x}\cdot\nabla \tilde{w}_{\pm})^{2} \, \rmd \mH^{n-1} \right)^{\frac{1}{2}} \left( \int_{\partial B_{1}} \abs{\tilde{w}_{\pm}}^{2} \, \rmd \mH^{n-1} \right)^{\frac{1}{2}} \\
& \quad \ge \frac{2 \beta_{\pm}}{\sqrt{\alpha_{\pm}}} \int_{\partial B_{1}} \abs{\tilde{w}_{\pm} \hat{x}\cdot \nabla \tilde{w}_{\pm}} \, \rmd \mH^{n-1}
\end{aligned}
\end{equation*}
and 
\begin{equation*}
\int_{\partial B_{1}} (1 - \beta_{\pm}^{2})  \abs{\nabla_{\partial B_{1}} \tilde{w}_{\pm}}^{2} \, \rmd \mH^{n-1} \ge \frac{1 - \beta_{\pm}^{2}}{\alpha_{\pm}} \int_{\partial B_{1}} \tilde{w}_{\pm}^{2} \, \rmd \mH^{n-1}.
\end{equation*}
We now choose  
\begin{equation*}
\beta_{\pm} = \frac{\sqrt{\alpha_{\pm}}}{2} \left( \left( (n-2)^{2} + \frac{4}{\alpha_{\pm}} \right)^{\frac{1}{2}} - (n-2) \right) ,\quad \gamma_{\pm} = \frac{\beta_{\pm}}{\sqrt{\alpha_{\pm}}}. 
\end{equation*}
By direct computations, we see that 
\begin{equation*}
\frac{1 - \beta_{\pm}^{2}}{\alpha_{\pm}} = (n-2) \frac{\beta_{\pm}}{\sqrt{\alpha_{\pm}}} = (n-2) \gamma_{\pm}
\end{equation*}
and 
\begin{equation}
\int_{\partial B_{1}} \abs{\nabla \tilde{w}_{\pm}}^{2} \, \rmd \mH^{n-1} \ge \gamma_{\pm} \left( \int_{\partial B_{1}} 2 \abs{\tilde{w}_{\pm} \hat{x}\cdot\nabla \tilde{w}_{\pm}} \, \rmd \mH^{n-1} + (n-2) \int_{\partial B_{1}} \tilde{w}_{\pm}^{2} \, \rmd \mH^{n-1} \right). \label{EQ:surface-estimate1}
\end{equation}
By using \cite[Theorem~E, Theorem~2 and Theorem~3]{FH76EigenvalueSphere}\footnote{The fundamental result \cite[Theorem~E]{FH76EigenvalueSphere} was proved in \cite{Sperner73EigenvalueSphere}.}\footnote{See also \cite[Section~2.4]{CK98GradientEstimates} for some discussions on a convexity property of the first Dirichlet eigenvalue of the Orstein-Uhlembeck operator $\Delta - x \cdot \nabla$ on a (sufficiently regular) open set in $\mathbb{R}^{n}$.}, one has 
\begin{equation*}
\gamma_{\pm} \ge \varphi(s_{\pm}), \quad \varphi(s) = 
\begin{cases}
\displaystyle{\frac{1}{2}\log\frac{1}{4s} + \frac{3}{2}} & \text{if $s < \displaystyle{\frac{1}{4}}$} \\
2(1-s) & \text{if $\displaystyle{\frac{1}{4}} \le s < 1$.}
\end{cases}
\end{equation*}
Since $\varphi$ is convex, then  
\begin{equation}
\gamma_{+} + \gamma_{-} \ge \varphi(s_{+}) + \varphi(s_{-}) \ge 2 \varphi \left( \frac{s_{+} + s_{-}}{2} \right) \ge 2 \varphi \left( \frac{1 - \theta}{2} \right) = 2 + 2 \theta. \label{EQ:surface-estimate2}
\end{equation}
From \eqref{EQ:surface-estimate1}, we obtain 
\begin{equation*}
r \int_{\partial B_{r}} \abs{\nabla w_{\pm}}^{2} \, \rmd \mH^{n-1} \ge \gamma_{\pm} \left( \int_{\partial B_{r}}  \abs{\hat{x}\cdot\nabla (w_{\pm}^{2})} \, \rmd \mH^{n-1} + (n-2)r^{-1} \int_{\partial B_{r}} w_{\pm}^{2} \, \rmd \mH^{n-1} \right).
\end{equation*}
Since 
\begin{equation*}
\frac{1}{r^{2}} \ell_{\pm}(r) \le \frac{1}{r^{2}} \norm{A}_{L^{\infty}} \int_{B_{r}} \abs{\nabla w_{\pm}}^{2} \Phi_{\mL} \, \rmd x \le \frac{C}{r^{2}} \int_{B_{r}} \abs{x}^{2-n} \, \rmd x \le C,
\end{equation*}
then from \eqref{EQ:apriori-monotonicity2} and $\norm{A-\Id}_{L^{\infty}(\partial B_{r})} \le C r^{\alpha}$ we obtain
\begin{equation}
\begin{aligned}
& r \int_{\partial B_{r}} \nabla w_{\pm} \cdot A \nabla w_{\pm} \Phi_{\mL} \, \rmd \mH^{n-1} \\
& \quad = r \int_{\partial B_{r}} \abs{\nabla w_{\pm}}^{2} \Phi_{\mL} \, \rmd \mH^{n-1} + r \int_{\partial B_{r}} \nabla w_{\pm} \cdot (A-\Id) \nabla w_{\pm} \Phi_{\mL} \, \rmd \mH^{n-1} \\
& \quad \ge \gamma_{\pm} \left( \int_{\partial B_{r}} \abs{\hat{x}\cdot A\nabla (w_{\pm}^{2})} \Phi_{\mL} \, \rmd \mH^{n-1} + \int_{\partial B_{r}} w_{\pm}^{2} \abs{\hat{x} \cdot A \nabla \Phi_{\mL}} \, \rmd \mH^{n-1} \right) - C r^{2+\alpha} \\
& \quad \ge (2\gamma_{\pm} - Cr^{\alpha}) \ell_{\pm}(r). 
\end{aligned} \label{EQ:surface-estimate3}
\end{equation}

\medskip

\noindent \textbf{Step 3: Conclusion.} We now put the above estimates together to conclude our lemma. From \eqref{EQ:apriori-monotonicity2}, one sees that $\ell_{\pm}(r)$ is in $L^{1}$, and its derivative exists for almost all small $r$. By using \eqref{EQ:surface-estimate2} and \eqref{EQ:surface-estimate3}, there exists a positive constant $\epsilon>0$ such that    
\begin{equation*}
\begin{aligned}
& \frac{\rmd}{\rmd r} \left( \frac{1}{r^{4}} \ell_{+}(r) \ell_{-}(r) \right) \\
& \quad = - \frac{4}{r^{5}} \ell_{+}(r) \ell_{-}(r) + \frac{1}{r^{4}} \ell_{-}(r) \int_{\partial B_{r}} \nabla w_{+} \cdot A \nabla w_{+} \Phi_{\mL} \, \rmd \mH^{n-1} \\
& \qquad + \frac{1}{r^{4}} \ell_{+}(r) \int_{\partial B_{r}} \nabla w_{-} \cdot A \nabla w_{-} \Phi_{\mL} \, \rmd \mH^{n-1} \\
& \quad \ge \frac{1}{r} ( 2 \gamma_{+} + 2 \gamma_{-}) - 4 - Cr^{\alpha} ) \left( \frac{1}{r^{4}} \ell_{+}(r) \ell_{-}(r) \right) \\
& \quad \ge \frac{1}{r} ( 4\theta - Cr^{\alpha} ) \left( \frac{1}{r^{4}} \ell_{+}(r) \ell_{-}(r) \right) \ge \frac{\epsilon}{r} \left( \frac{1}{r^{4}} \ell_{+}(r) \ell_{-}(r) \right).
\end{aligned}
\end{equation*}
By integrating  the above inequality, we conclude our lemma. 
\end{proof}

\section*{Acknowledgments}

\noindent 
Kow was partly supported by the NCCU Office of research and development. Kow and Salo were partly supported by the Academy of Finland (Centre of Excellence in Inverse Modelling and Imaging, 312121) and by the European Research Council under Horizon 2020 (ERC CoG 770924). Shahgholian was supported by Swedish Research Council (grant no. 2021-03700). 

\section*{Declarations}

\noindent {\bf  Data availability statement:} All data needed are contained in the manuscript.

\medskip
\noindent {\bf  Funding and/or Conflicts of interests/Competing interests:} The authors declare that there are no financial, competing or conflict of interests.

\end{sloppypar}

\end{document}